\documentclass[letterpaper, 10 pt, conference]{ieeeconf}  

\IEEEoverridecommandlockouts                              
\usepackage{cite}
\usepackage{amsmath,amssymb,amsfonts}
\usepackage{algorithmic}
\usepackage{graphicx}
\usepackage{textcomp}
\usepackage{xcolor}
\usepackage{graphics}
\usepackage{epsfig}
\usepackage{subfig}
\usepackage{float}
\usepackage{color}
\usepackage{cancel}
\usepackage{subfig}
\usepackage{cuted} 
\usepackage{amsfonts}
\usepackage{epstopdf}
\usepackage{amssymb}
\usepackage{mathptmx}
\usepackage{ntheorem}
\usepackage{cleveref}

\newtheorem{lemma}{Lemma}
\newtheorem{thme}{Theorem}
\newtheorem{remark}{Remark}
\newtheorem{define}{Definition}

\title{\LARGE \bf
Exponential Stabilization of Moving Shockwave in  ARZ Traffic Model via Boundary Control:  \emph{Explicit Gains and Arbitrary  Decay Rate}
}

\author{Mina Cao,  Mamadou Diagne, Peipei Shang and Lei Yu 
\thanks{The work of Mina Cao, Peipei Shang and Lei Yu was funded by the National Natural Science Foundation of China (No. 12171368). The work of Mamadou Diagne was funded by the NSF CAREER Award CMMI-2302030 and  the NSF grant CMMI-2222250.  }
\thanks{Mina Cao (2230913@tongji.edu.cn), Peipei Shang (shang@tongji.edu.cn)  and Lei yu (yu\_lei@tongji.edu.cn) are with School of Mathematical Sciences, Key Laboratory of Intelligent Computing and Applications (Ministry of Education), Tongji University.  Mamadou Diagne (mdiagne@ucsd.edu) is with Department of Mechanical and Aerospace Engineering, UC San Diego, 9500 Gilman Drive, La Jolla, CA, 92093-0411.}
}

\allowdisplaybreaks
\begin{document}
\setlength\abovedisplayskip{2pt}
        \setlength\belowdisplayskip{2pt}
        \setlength\abovedisplayshortskip{2pt}
        \setlength\belowdisplayshortskip{2pt}
        \allowdisplaybreaks
        \setlength{\parindent}{1em}
        \setlength{\parskip}{-0em}  
        \addtolength{\oddsidemargin}{2 pt}

\maketitle
\thispagestyle{empty}
\pagestyle{empty}

\begin{abstract}

This paper develops boundary feedback controls to stabilize traffic congestion toward a predefined shock equilibrium in the Aw-Rascle-Zhang (ARZ) traffic flow model. We transform the corresponding moving-boundary $2\times2$ hyperbolic system, covering free and congested flow regimes, respectively, into a shock-free $4\times4$ augmented system on a fixed domain via shock-location-based moving coordinates. By applying the modified Lyapunov functionals concerning shock perturbation, we show that the shock position and the state of the system in $H^2$-norm can be stabilized with an arbitrary exponential decay rate via the given feedback controls. Finally, the stabilization results are demonstrated by numerical simulations.

\end{abstract}

\section{INTRODUCTION}

To overcome the drawbacks of the LWR model \cite{lwr,richa}, various studies have introduced an additional momentum conservation law to describe traffic flow acceleration, giving rise to the class of second-order macroscopic traffic flow models. The works in \cite{ar} and \cite{zg} independently developed representative models of this class. These two models are mathematically equivalent under some conditions (see \cite{YU2019}), yielding the unified ARZ model studied in this paper, which is formulated as a $2\times2$ nonlinear conservation laws:
\begin{equation}
	\begin{cases}
		\partial_t\rho+\partial_x(z-\rho p(\rho))=0,\\
		\partial_tz+\partial_x\left(\frac{z^2}\rho-zp(\rho)\right)=0,
	\end{cases}
	\label{arz}
\end{equation}
where, $\rho$ denotes the traffic density, $p(\rho)$ denotes the traffic pressure, and $z$ is the generalized momentum
defined as $z=\rho(v+p(\rho))$, where $v$ is the flow velocity.
The first equation is conservation of mass and the second one is conservation of generalized momentum. 

Most existing studies on the stability of hyperbolic traffic flow systems focus on smooth classical solutions, yet shock waves naturally emerge in practical traffic flow, as characteristic speeds rely on local traffic states such as density and velocity. Traffic congestion typically propagates in a pattern analogous to shock waves, making it critical to study boundary feedback control problems involving shock waves, with the goal of designing control to stabilize traffic flow and suppress shock wave propagation.
Limited research has focused on shock stabilization for traffic flow models. One main reason is that it is rather difficult to describe the boundary condition when shock solutions are considered for the hyperbolic systems, which makes the stabilization through boundary control much challenging.

Recently, \cite{burger} realized shock steady-state stability for the Burgers equation via boundary feedback control using shock position and bilateral state information, by transforming the shock-involved system into a shock-free system via coordinate transformation and introducing a shock-dependent perturbation term. This framework has also been extended to the Saint-Venant equations for hydraulic jump control \cite{sv}. The method they used is the Lyapunov approach, initially proposed by Coron {\it et al.}\cite{coron1st1} and then adapted to various kinds of problems related to stabilization of hyperbolic systems, see \cite{BCBook} for a comprehensive study. Using this method, {Zhang et al. \cite{stopandgo} began to study a very relevant stop-and-go wave stabilization for the ARZ model and gave sufficient conditions for local exponential stabilization} in terms of matrix inequalities; however, it is usually difficult to prove theoretically the existence of the parameters using LMIs, only numerical method can be applied.

On the other hand, based on the LWR model, \cite{bstp} studied stabilization of traffic flows when a shock has formed.
Taking the system as a delayed control system, they achieved local stability with an arbitrary convergence rate. The main method they used is the backstepping method, initially proposed by Krstic \cite{krstic20082}, is a powerful tool to study the stabilization of hyperbolic systems with source terms with less controls, see \cite{KrsticBook} for a comprehensive introduction.
However, the feedback control obtained by the backstepping method is full-state feedback, which is usually difficult to implement, although sometimes the observer can be designed \cite{2017DDTK}.

Inspired by \cite{burger}, in this paper, we target shock-based traffic congestion stabilization via straightforward static-feedback control by the Lyapunov approach, i.e., the boundary feedback control we apply only needs the information from the boundaries, which is relatively simple to apply in practical applications.
To that end, we transform the corresponding moving-boundary $2\times2$ hyperbolic system, covering free and congested flow regimes, respectively, into a shock-free $4\times4$ augmented system on a fixed domain via shock-location-based moving coordinates.
Different from the work in \cite{stopandgo}, where the system is formulated in terms of the evolution of the density and the velocity, in this work, we consider the density and the generalized momentum as two states. The reason is that the two quantities are conserved through the jump in the jump's reference frame.
{ In addition, we emphasize the main differences between the work \cite{stopandgo} and our results. Firstly, the authors assume in \cite{stopandgo} that the traffic pressure is linear with respect to the density $\rho$ and has a very special precise form, which results in a relatively simple structure of hyperbolic systems and makes the analysis of dissipative boundary conditions easier to perform. By contrast, in this paper, we deal with general traffic pressure satisfying reasonable conditions, which include the special case studied in \cite{stopandgo}. 
Secondly, by fully using the information contained in the shock, we can realize the exponential stabilization using only three controls rather than four controls that are designed in \cite{stopandgo}.
Thirdly, by choosing the feedback controls properly, we can prove theoretically the existence of the tuning parameters to stabilize the system with a shock and give an explicit range of the tuning parameters. Moreover, the decay rate can be made arbitrarily large.
}

The remainder of this paper is structured as follows. Section \ref{ps} states the stabilization problem of the ARZ model with shock and proposes boundary feedback control strategy. Section \ref{wps} establishes the well-posedness of the system. Section \ref{stb-pf} proves the exponential stability of the closed-loop system. Numerical simulations in Section \ref{sec_simu} verify the theoretical results. Section \ref{sec_con} concludes the paper.

\section{Problem statement}
\label{ps}
Consider a road segment of length $L$, that is, the space domain involved in the problem is bounded, denoted as $[0,L]$. We focus on the stabilizaiton of the ARZ model (\ref{arz}) with shock in this space domain. 
The pressure $p\in C^2([0,+\infty);[0,+\infty))$ satisfies the following assumptions:
\begin{equation}
		p(0)=0,\quad
		p'(\rho)>0,\quad \rho\to \rho p(\rho)\,\, \text{is strictly convex}. 
	\label{passume}
\end{equation}
To close the system we need to establish a relationship between $\rho$ and $z$ before and after the shock, since the solution $(\rho,z)^\top$ of (\ref{arz}) exists a jump discontinuity on $x_s(t)\in(0,L)$. For any well-defined function $f$ in the neighborhood of $x_s$, the following notation is given:
$$[f]_-^+=f(x_s^+(t))-f(x_s^-(t)).$$
By Rankine-Hugoniot condition we have,
\begin{equation}
	\begin{cases}
		[z-\rho p(\rho)]_-^+=\dot{x_s}[\rho]_-^+,\\
		\left[\frac{z^2}\rho-zp(\rho)\right]_-^+=\dot{x_s}[z]_-^+,
	\end{cases}
	\label{rh1}
\end{equation}
where $\dot{x_s}$ denotes the time derivative of $x_s$, which is the moving pace of the jump discontinuity. From \eqref{rh1}, direct computations give that
\begin{align}
	&\dot{x_s}=\frac{[z-\rho p(\rho)]_-^+}{[\rho]_-^+},\label{sh}\\
    &\left[\frac z\rho\right]_-^+\left[\frac z\rho - p(\rho)\right]_-^+=0.\label{rh}
    \end{align}
One can easily check that the first characteristic field is genuinely nonlinear, the second is linearly degenerate.
The points $(\rho_f,z_f)\in (\mathbb{R}^+)^2$ and $(\rho_c,z_c)\in (\mathbb{R}^+)^2$
are joined by a shock $x_s(t)$ if and only if the following holds 
\begin{equation}\label{e:rhc}
    \frac{z_f(t,x_s^-)}{\rho_f(t,x^-_s)}= \frac{z_c(t,x_s^+)}{\rho_c(t,x^+_s)}, \quad \rho_f(t,x^-_s)>\rho_c(t,x^+_s),
\end{equation}
see \cite{VARZ} for the details.
Our aim is to stabilize the shock steady state $\left((\rho^*,z^*)^\top,x_s^*\right)$ of the system of (\ref{arz}) and \eqref{sh}--(\ref{rh}). The steady state satisfies $x_s^*\in(0,L)$ and
	\begin{equation}
		\rho^*\!\!=\!\!\begin{cases}
			\rho_f^*,\, x\in[0,x_s^*),\\
			\rho_c^*,\, x\in(x_s^*,L],
		\end{cases}
        z^*\!\!=\!\!\begin{cases}
			z_f^*,\, x\in[0,x_s^*),\\
			z_c^*,\, x\in(x_s^*,L],
		\end{cases}
		\label{steady}
	\end{equation}
	where $\rho_f^*,\ \rho_c^*,\ z_f^*,\ z_c^*$ are positive constants satisfying
\begin{equation}\label{czr}
       z^*_f \rho^*_c=z^*_c\rho^*_f.
    \end{equation}
By (\ref{sh}), $\dot{x_s}=0$ implies that the steady state $\big(\left(\rho^*,z^*\right)^\top,x_s^*\big)$ satisfies
	$$z_f^*-\rho_f^*p\big(\rho_f^*\big)=z_c^*-\rho_c^*p\left(\rho_c^*\right).$$
	The traffic flow is free before the shock and congested after the shock. Thus the eigenvalues of the system are both positive before the discontinuity, i.e., when $x\in[0,x_s^*)$
    \begin{equation}\label{eig1}
    \begin{aligned}
    &\lambda_1=-p(\rho_f^*)+\frac{z_f^*}{\rho_f^*}-\rho_f^*p'(\rho_f^*)>0,\\
        & \lambda_2=-p(\rho_f^*)+\frac{z_f^*}{\rho_f^*}>0,
\end{aligned}
\end{equation}
	while become one positive and one negative after the shock, i.e., when $x\in(x_s^*,L]$
	\begin{equation}\label{eig2}
    \begin{aligned}
		-\lambda_3&=-p(\rho_c^*)+\frac{z_c^*}{\rho_c^*}-\rho_c^*p'(\rho_c^*)<0, \\
        \lambda_4&=-p(\rho_c^*)+\frac{z_c^*}{\rho_c^*}>0.
        \end{aligned}
	\end{equation}
Note that by the assumption of traffic pressure (\ref{passume}) and \eqref{czr}, equations (\ref{eig1})--(\ref{eig2}) implies that $\rho_f^*<\rho_c^*$.

We consider the following static feedback control:
\begin{equation}
	\begin{pmatrix}
		\rho(t,0)\!-\!\rho_f^*\\
		z(t,0)\!-\!z_f^*\\
		z(t,L)\!-\!z_c^*
	\end{pmatrix}\!\!=\!G\!\!\begin{pmatrix}
		z(t,x_s^-)\!-\!z_f^*\\
		z(t,x_s^+)\!-\!z_c^*\\
		\rho(t,x_s^-)\!-\!\rho_f^*\\
		x_s\!-\!x_s^*
	\end{pmatrix}\!\!-\!\!\begin{pmatrix}
		0\\
		0\\
		G_4(\rho(t,L)\!-\!\rho_c^*)
	\end{pmatrix},
	\label{boun}
\end{equation}
where $G=(G_1,G_2,G_3)^\top :\ \mathbb{R}^4\to\mathbb{R}^3,\ G\in C^2$, $G_4 :\ \mathbb{R}\to\mathbb{R},\ G_4\in C^2$, and
\begin{equation}
	G(\mathbf0)=\mathbf0,\quad G_4(0)=0,\quad G'_4(0)=-\lambda_4.
	\label{boun11}
\end{equation}
Note that the steady state$\left((\rho^*,z^*)^\top,x_s^*\right)$ satisfies the boundary conditions (\ref{boun}) obviously. In real case applications, the implementation of this boundary feedback control is rather easy since it only needs some pointwise measurements at $\rho(t,L), x_s(t), \rho(t,x_s^-), z(t,x_s^+)$ and $z(t,x_s^-)$.

Consider the following initial conditions that
\begin{equation}
	\rho(0,x)=\rho^0(x),\quad z(0,x)=z^0(x),\quad x_s(0)=x_s^0,
	\label{ini}
\end{equation}
where $x_s^0\in(0,L)$ and $(\rho^0(x),z^0(x))^\top\!\!\!\!\in\!\!\! H^2\left((0,x_s^0);\mathbb{R}^2\right)\!\!\cap\!\! H^2\left((x_s^0,L);\mathbb{R}^2\right).$ {\color{black}Suppose that the initial conditions satisfy the first-order compatibility condition implied by \eqref{boun} and \eqref{ini},} we make a definition as follows:
\begin{define}
	If there exists $\delta^*>0,\ C^*>0,$ s.t. $\forall\ (\rho^0(x),z^0(x))^\top\in H^2\left((0,x_s^0);\mathbb{R}^2\right)\cap H^2\left((x_s^0,L);\mathbb{R}^2\right),$ $\forall\ x_s^0\in(0,L)$ satisfy
	\begin{equation*}
    \begin{aligned}
		&\left\Vert(\rho^0-\rho_f^*,z^0-z_f^*)^\top\right\Vert_{H^2\left((0,x_s^0);\mathbb{R}^2\right)}\\
        +&\left\Vert(\rho^0-\rho_c^*,z^0-z_c^*)^\top\right\Vert_{H^2\left((x_s^0,L);\mathbb{R}^2\right)}+\left|x_s^0-x_s^*\right|\leqslant\delta^*,
        \end{aligned}
	\end{equation*}
	and the first-order compatibility condition given by \eqref{boun}, also for any $T>0$, the system of $(\ref{arz})$, \eqref{sh}--\eqref{rh}, \eqref{boun}, $(\ref{ini})$ has a unique solution $x_s\in C^1([0,T])$ and $(\rho,z)^\top\in C^0\left([0,T];H^2\left((0,x_s(t));\mathbb{R}^2\right)\cap H^2\left((x_s(t),L);\mathbb{R}^2\right)\right)$
    satisfying
	\begin{equation}
		\begin{aligned}
		&\left\Vert(\rho(t,\cdot)-\rho_f^*,z(t,\cdot)-z_f^*)^\top\right\Vert_{H^2\left((0,x_s(t));\mathbb{R}^2\right)}\\
        +&\left\Vert(\rho(t,\cdot)-\rho_c^*,z(t,\cdot)-z_c^*)^\top\right\Vert_{H^2\left((x_s(t),L);\mathbb{R}^2\right)}\\
        +&\left|x_s(t)-x_s^*\right|\\
			\leqslant &C^*e^{-\gamma t}\Big(\left\Vert(\rho^0-\rho_f^*,z^0-z_f^*)^\top\right\Vert_{H^2\left((0,x_s^0);\mathbb{R}^2\right)}\\
            &+\left\Vert(\rho^0-\rho_c^*,z^0-z_c^*)^\top\right\Vert_{H^2\left((x_s^0,L);\mathbb{R}^2\right)}+\left|x_s^0-x_s^*\right|\Big)
		\end{aligned}
		\label{stbcon3}
	\end{equation}
	for any $t\in[0,T]$, then we say the steady state $\left((\rho^*,z^*)^\top,x_s^*\right)$ is locally exponentially stable with decay rate $\gamma$ in $H^2$ norm.
	\label{stbdef}
\end{define}
Before introducing the main stability result in this article, we give the following notations and definitions:
\begin{equation*}
\begin{aligned}
	&D(x,\gamma)\!=\!\mbox{diag}\left(\!\!\frac{s_i}{b_i}\!\left(\textcolor{black}{\frac{\lambda_i}{\widetilde\lambda_i}}\!-\!\textcolor{black}{\frac{s_i\lambda_4}{\widetilde \lambda_4}}R_i\right)\!\exp\!\left(\!\!\frac{\gamma\left(x_s^*-x_s\right)}{x_i\lambda_i}\!\!\right)\!\!\right)_{i\in\{1,2,3\}},
    \end{aligned}
\end{equation*}

\begin{equation*}
\begin{aligned}
	&\widetilde{D}(\mu)\!=\! \mbox{diag}\! \left(\! \!\left(\! \sum_{j=1}^3\!\exp\!\left(\! \frac{\mu x_s^*}{x_i\lambda_i}\! \!-\!\! \frac{\mu x_s^*}{x_j\lambda_j}\! \right)\! \right)\! \left(\textcolor{black}{\!\!\frac{\lambda_i}{\widetilde\lambda_i}}\!\!-\!\!\textcolor{black}{\frac{s_i\lambda_4}{\widetilde \lambda_4}}R_i\!\!\right)^2\right)_{i \in\{1,2,3\}}.
    \end{aligned}
\end{equation*}
Let us denote by
\begin{equation*}
K_1=\begin{pmatrix}
		\frac{\widetilde\lambda_1\widetilde\lambda_2}{\widetilde\lambda_2-\widetilde\lambda_1}&\frac{-\widetilde\lambda_1}{\widetilde\lambda_2-\widetilde\lambda_1}&0\\
		\frac{-\widetilde\lambda_1\widetilde\lambda_2}{\widetilde\lambda_2-\widetilde\lambda_1}&\frac{\widetilde\lambda_2}{\widetilde\lambda_2-\widetilde\lambda_1}&0\\
		0&0&\frac{\widetilde\lambda_3}{\widetilde\lambda_3+\widetilde\lambda_4}
	\end{pmatrix}
\end{equation*}
and define \begin{equation}
	K=K_1G'(\mathbf{0})\begin{pmatrix}
		1&1&0\\
		\frac{\rho_c^*-\frac{z_c^*}{\widetilde\lambda_1}}{\rho_f^*-\frac{z_f^*}{\widetilde\lambda_4}}&\frac{\rho_c^*-\frac{z_c^*}{\widetilde\lambda_2}}{\rho_f^*-\frac{z_f^*}{\widetilde\lambda_4}}&-\frac{\frac{z_f^*}{\widetilde\lambda_3}+\frac{z_f^*}{\widetilde\lambda_4}}{\rho_f^*-\frac{z_f^*}{\widetilde\lambda_4}}\\
		\frac{1}{\widetilde\lambda_1}&\frac{1}{\widetilde\lambda_2}&0\\
		0&0&0
	\end{pmatrix},
	\label{stbnt-K}
\end{equation}

\begin{equation}
	(b_1,b_2,b_3)^\top=K_1G'(\mathbf{0})(0,0,0,1)^{\top},
	\label{stbnt-b}
\end{equation}
where $s_1=s_2=-s_3=1$, $x_1=x_2=1,\ x_3=-x_4=\frac{x_s^*}{L-x_s^*},$
$\widetilde\lambda_i=s_i\lambda_i+p(\rho^*_{(i)})+\rho^*_{(i)}p'(\rho^*_{(i)})$ and
\begin{equation}\label{notations}
\begin{aligned}
\begin{pmatrix}
	\rho_{(i)}^*\\
	z_{(i)}^*
\end{pmatrix}&=\begin{cases}
\left(\rho_f^*,z_f^*\right)^\top,\quad &i\! =\! 1,2,\\
\left(\rho_c^*,z_c^*\right)^\top,\quad &i\! =\! 3,4,
\end{cases},\\
R_i&=\frac{\rho_{(5-i)}^*-\frac{z_{(5-i)}^*}{\widetilde\lambda_i}}{\rho_f^*-\frac{z_f^*}{\widetilde\lambda_4}},\ i=1,2,3.
\end{aligned}
\end{equation}
On the basis of Definition \ref{stbdef}, we have the main results:
\begin{thme}
	Consider system $(\ref{arz})$ with boundary conditions $(\ref{boun})$, for any given steady state $\left((\rho^*,z^*)^\top,x_s^*\right)$, denote by $$L_i=\frac{1}{\rho_c^*-\rho_f^*}\left(\textcolor{black}{-\frac{\lambda_i}{\widetilde\lambda_i}}+\textcolor{black}{\frac{s_i\lambda_4}{\widetilde \lambda_4}}R_i\right),\ i=1,2,3.\ $$ For any $\gamma>0$, suppose $b_i,\ i=1,2,3$ satisfy
	\begin{itemize}
		\item If $L_i>0$, then
		\begin{equation}
			b_i\!\!\in\!\!\left(\!\!\frac{-\gamma\exp\left(\!-\frac{\gamma x_s^*}{x_i\lambda_i}\right)}{3L_i\left(1\!\!-\!\exp\left(\!-\frac{\gamma x_s^*}{x_i\lambda_i}\right)\right)},\frac{-\gamma\exp\left(\!-\frac{\gamma x_s^*}{x_1\lambda_1}\right)}{3L_i}\!\!\right),
			\label{bi+}
		\end{equation}
		\item If $L_i<0$, then
		\begin{equation}
			b_i\!\!\in\!\!\left(\!\!\frac{-\gamma\exp\left(\!-\frac{\gamma x_s^*}{x_i\lambda_i}\right)}{3L_i},\frac{-\gamma\exp\left(-\frac{\gamma x_s^*}{x_i\lambda_i}\right)}{3L_i\left(1\!\!-\!\exp\!\left(\!-\frac{\gamma x_s^*}{x_1\lambda_1}\right)\!\right)}\!\right),
			\label{bi-}
		\end{equation}
	\end{itemize}
	and the matrix
	\begin{equation}
		\begin{aligned}
			&D\left(x_s^*,\gamma\right)-K^\top D\left(0,\gamma\right)K-\\
			&\left(\sum_{k=1}^3\!\frac{-2b_ks_k}{\gamma^2\left(\rho_c^*\!-\!\rho_f^*\right)^2}\!\left(\textcolor{black}{\frac{\lambda_i}{\widetilde\lambda_i}}\!\!-\!\!\textcolor{black}{\frac{s_i\lambda_4}{\widetilde \lambda_4}}R_i\right)\left(\!\exp\!\left(\!\frac{\gamma x_s^*}{x_k\lambda_k}\!\right)\!\!-\!1\!\!\right)\!\!\!\right)\!\!\widetilde{D}(\gamma)
		\end{aligned}
		\label{+dft}
	\end{equation}
	is positive definite, then the steady state $\left((\rho^*,z^*)^\top,x_s^*\right)$ is locally y exponentially stable with the decay rate of $\frac{\gamma}4$ in $H^2$ norm.
	\label{stbthm}
\end{thme}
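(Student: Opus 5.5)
The plan follows the strategy of \cite{burger,sv}: change coordinates to remove the moving shock, then run a Lyapunov argument on a fixed domain.

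\textbf{Step 1: fixed-domain augmented system.} Introduce the shock-based moving coordinates
\begin{equation*}
x\mapsto \frac{x}{x_s(t)}\in[0,1] \ \text{on the free side}, \qquad x\mapsto \frac{L-x}{L-x_s(t)}\in[0,1] \ \text{on the congested side}.
\end{equation*}
This turns \eqref{arz} into a shock-free $4\times4$ quasilinear hyperbolic system on $[0,1]$, coupled with the ODE \eqref{sh} for $x_s$.

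Next diagonalize around the steady state \eqref{steady} using the Riemann-type coordinates associated with $\widetilde\lambda_i$. This gives characteristic variables $u_1,u_2$ (free side) and $u_3,u_4$ (congested side), with speeds close to $\lambda_i/x_i$. The rescaled lengths are what produce the factors $x_i$.

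Use the Rankine--Hugoniot relations \eqref{rh1}--\eqref{e:rhc} at $x=1$ and the boundary control \eqref{boun}--\eqref{boun11} at $x=0$. Together they express the incoming characteristics $(u_1,u_2,u_3)(t,0)$ in terms of the outgoing ones at the shock and of $x_s-x_s^*$. Linearized, this is exactly $K(u_1,u_2,u_3)(t,1)^\top+(b_1,b_2,b_3)^\top(x_s-x_s^*)$ plus quadratic terms. The choice $G_4'(0)=-\lambda_4$ makes the $u_4$ reflection vanish to first order, which explains the factor $K_1$.

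Linearizing \eqref{sh} and eliminating $u_4(t,1)$ through the second Rankine--Hugoniot relation gives
\begin{equation*}
\dot x_s=\sum_{i=1}^3 L_i\,u_i(t,1)+O(|u|^2).
\end{equation*}

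\textbf{Step 2: well-posedness.} Local existence of $H^2$ solutions with $x_s\in C^1$ follows from the standard theory for quasilinear hyperbolic systems with nonlocal boundary conditions, using the first-order compatibility conditions. I would take this from Section \ref{wps}.

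\textbf{Step 3: Lyapunov functional.} Following \cite{burger}, absorb the shock perturbation into modified variables
\begin{equation*}
w_i=u_i-\frac{b_i}{\gamma}\,\varphi_i(x)\,(x_s-x_s^*),
\end{equation*}
with exponential profiles $\varphi_i$ chosen so that the boundary conditions at $x=0$ for $w$ contain no $x_s-x_s^*$ term.

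Take the weighted functional
\begin{equation*}
V=\sum_{i}\int_0^1 p_i\, e^{-\mu x/(x_i\lambda_i)}\big(w_i^2+w_{i,t}^2+w_{i,tt}^2\big)\,dx+q\,(x_s-x_s^*)^2 .
\end{equation*}
The weights are taken diagonal, as in $D(x,\gamma)$, with coefficients $\frac{s_i}{b_i}\bigl(\frac{\lambda_i}{\widetilde\lambda_i}-\frac{s_i\lambda_4}{\widetilde\lambda_4}R_i\bigr)$. The time derivatives are controlled in place of the spatial ones through the equations, which gives the $H^2$ norm.

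Differentiate $V$ along solutions and integrate by parts. The interior terms give $-\mu V$ plus cubic terms. The boundary terms form a quadratic form in $(w_i(t,1))$ together with the shock term. Cross terms between $x_s-x_s^*$ and $w_i(t,1)$ are bounded by Young's inequality, and this is where the $\sum_j$ structure of $\widetilde D(\gamma)$ comes from.

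The interval conditions \eqref{bi+}--\eqref{bi-} on $b_i$ are designed to make the weights positive and to make the $(x_s-x_s^*)^2$ coefficient negative enough. The positive definiteness of \eqref{+dft} is then precisely the requirement that the boundary quadratic form is nonpositive. Choosing $\mu$ comparable to $\gamma$ yields
\begin{equation*}
\dot V\le -\tfrac{\gamma}{2}V+C\,V^{3/2}.
\end{equation*}

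\textbf{Step 4: conclusion.} For small $\delta^*$ this gives $V(t)\le e^{-\gamma t/2}V(0)$. Since $V$ is equivalent to the squared norm, taking square roots gives the decay rate $\gamma/4$. Combined with the a priori estimate, this extends the local solution to all $T$, and the Definition follows.

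\textbf{Main obstacle.} The hardest step is Step 3's algebra. It requires verifying that, after the $x_s$-dependent change of variables, the boundary terms of $\dot V$ at orders 0, 1 and 2 reduce exactly to the matrix \eqref{+dft}, and that the $b_i$ ranges make the shock-coupling term dissipative. I would first carry this out for the $L^2$ level of the linearized system, where it is cleanest, and then argue that differentiating in $t$ preserves the same boundary structure up to higher-order terms.
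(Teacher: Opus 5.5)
Your overall architecture is the paper's: the fixed-domain $4\times4$ system, Riemann coordinates, the linearized boundary map $K\mathbf v(t,x_s^*)+b\,(x_s-x_s^*)$, $\dot x_s\approx\sum_iL_iu_i(t,x_s^*)$, weighted functionals in $u,u_t,u_{tt}$ with a shock perturbation, and $\dot V\le-\frac{\gamma}{2}V$. But Step 3, which is where the explicit hypotheses come from, rests on a mechanism that would not produce them.

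The paper does \emph{not} bound the cross terms $(x_s-x_s^*)\,u_j(t,x_s^*)$ by Young's inequality; it cancels them identically. In $\frac{d}{dt}(V_1+V_4)$ they arise in two places. From the $x=0$ boundary, after inserting \eqref{vboun}, the coefficient is $\sum_ix_ik_{ij}(2p_i\lambda_ib_i+p_i')$. From the shock side it is $2C_0\Theta_j-p_j'x_je^{-\mu x_s^*/(x_j\lambda_j)}$, where $2C_0\Theta_j$ comes from $\frac{d}{dt}C_0(x_s-x_s^*)^2$ and $\Theta_j=L_j$. The choices \eqref{pi'}--\eqref{pi} kill both. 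Only then is the $(x_s-x_s^*)^2$ coefficient exactly $C_0\big(\mu+\sum_ib_iL_ie^{\mu x_s^*/(x_i\lambda_i)}\big)$, which is negative by the endpoint $-\gamma e^{-\gamma x_s^*/(x_i\lambda_i)}/(3L_i)$ of \eqref{bi+}--\eqref{bi-}. The requirement $p_i>0$ is $b_iL_i<0$. The other endpoint keeps $V$ positive definite despite the sign-indefinite cross term (condition \eqref{const}). If you Young these cross terms instead, you add positive contributions of order $C_0^2L_j^2$ to both the $(x_s-x_s^*)^2$ coefficient and the boundary matrix. The strict inequalities \eqref{bi+}--\eqref{+dft} leave no quantified room for them. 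Likewise, $\widetilde D$ does not come from terms $(x_s-x_s^*)w_i(t,1)$. It comes from $\dot x_s\int_0^{x_s^*}\frac{p_j'}{\lambda_j}e^{-\mu x/(x_j\lambda_j)}u_j\,dx$, via Young with the $\varepsilon_j$ of \eqref{eps} (costing $\frac{\mu}{2}V_1$), followed by a weighted Cauchy--Schwarz on $\big(\sum_i\Theta_iu_i(x_s^*)\big)^2$.

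Your change of variables can be reconciled with this, but only as the constant shift $w_i=u_i-b_i(x_s-x_s^*)$ (i.e.\ $\varphi_i\equiv\gamma$), with the weights $p_ix_i\lambda_i=-C_0L_ie^{\mu x_s^*/(x_i\lambda_i)}/b_i$ that you read off from $D$. Then $\sum_{i\le3}\int_0^{x_s^*}p_ie^{-\mu x/(x_i\lambda_i)}w_i^2\,dx+q(x_s-x_s^*)^2$ is exactly the $i\le3$ part of $V_1+V_4$. Here $p_i'/\lambda_i=-2b_ip_i$ and $q=C_0-\sum_ib_i^2p_i\int_0^{x_s^*}e^{-\mu x/(x_i\lambda_i)}dx$, and the outer endpoints of \eqref{bi+}--\eqref{bi-} guarantee $q>0$. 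Three parts of your plan must change.

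(i) Exponential profiles chosen so that $w(t,0)$, written through $w(t,1)$, has no $x_s-x_s^*$ term create interior sources $-\frac{b_i}{\gamma}x_i\lambda_i\varphi_i'(x)(x_s-x_s^*)$. These produce new cross terms the hypotheses do not cover. With the constant shift, $w(t,0)=K\mathbf v(t,x_s^*)$ up to quadratic terms, expressed through $u$, which is what the computation uses.

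(ii) You must not Young the whole of $\dot x_s\int p_ie^{-\mu x/(x_i\lambda_i)}w_i\,dx$. It hides the piece $2b_i^2p_i(x_s-x_s^*)\dot x_s\int_0^{x_s^*}e^{-\mu x/(x_i\lambda_i)}dx$, which the cancellation needs. Otherwise you would need $\sum_ib_iL_i<-\mu$, which the hypotheses do not imply.

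(iii) $V$ must also contain $q\dot x_s^2+q\ddot x_s^2$, as in \eqref{Lv5}--\eqref{Lv6}. The $(u_t,\dot x_s)$ and $(u_{tt},\ddot x_s)$ systems have the same structure as $(u,x_s-x_s^*)$. Without these terms the level-one computation leaves an uncancelled $-2q\,\dot x_s\ddot x_s$, with an analogue at level two.
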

The existence of the control function $G$
such that $K$ and $(b_1, b_2, b_3)^{\top}$ defined in (\ref{stbnt-K})--(\ref{stbnt-b}) satisfying (\ref{bi+})--(\ref{+dft}) is discussed in
Remark \ref{exK}.
\section{Well-posedness of the system}
\label{wps}
In this section, we will prove the well-posedness of the system of the ARZ model (\ref{arz}) with shock (\ref{rh}), boundary feedback control (\ref{boun}) and the initial condition (\ref{ini}). Firstly, we state the following well-posedness theorem.
\begin{thme}\label{thm02}
	For any $T>0,\ \exists\ \delta(T)>0$ such that for any given initial condition $(\ref{ini})$ satisfies
	\begin{equation}\label{wpdcon1}
    \begin{aligned}
		&\left\Vert(\rho^0-\rho_f^*,z^0-z_f^*)^\top\right\Vert_{H^2\left((0,x_s^0);\mathbb{R}^2\right)}\\
        &+\left\Vert(\rho^0-\rho_c^*,z^0-z_c^*)^\top\right\Vert_{H^2\left((x_s^0,L);\mathbb{R}^2\right)}+\left|x_s^0-x_s^*\right|\leqslant\delta(T),
        \end{aligned}
	\end{equation}
	and the first-order compatibility conditions, the system (\ref{arz}),\eqref{sh}--\ref{rh}), (\ref{boun}),(\ref{ini}) has a unique solution $$(\rho,z)^\top\!\!\in\!C^0\!\left([0,T];H^2\left((0,x_s(t));\mathbb{R}^2\right)\!\cap\! H^2\left((x_s(t),L);\mathbb{R}^2\right)\right),$$
	$$x_s\in C^1([0,T]),$$ and the following inequality
	\begin{equation*}
		\begin{aligned}
			&\hspace{-1em}\left\Vert(\rho(t,\cdot)-\rho_f^*,z(t,\cdot)-z_f^*)^\top\right\Vert_{H^2\left((0,x_s(t));\mathbb{R}^2\right)}\\
            \\+ &\left\Vert(\rho(t,\cdot)-\rho_c^*,z(t,\cdot)-z_c^*)^\top\right\Vert_{H^2\left((x_s(t),L);\mathbb{R}^2\right)}\! +\! \left|x_s(t)-x_s^*\right|&\\
			\leqslant &C(T)\Big(\left\Vert(\rho^0-\rho_f^*,z^0-z_f^*)^\top\right\Vert_{H^2\left((0,x_s^0);\mathbb{R}^2\right)}\\
            &+\left\Vert(\rho^0-\rho_c^*,z^0-z_c^*)^\top\right\Vert_{H^2\left((x_s^0,L);\mathbb{R}^2\right)}+\left|x_s^0-x_s^*\right|\Big)&
		\end{aligned}
	\end{equation*}
	holds for any $t\in[0,T].$
	\label{wpdthm}
\end{thme}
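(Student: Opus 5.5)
The proof of Theorem \ref{wpdthm} goes through the change of variables announced in the introduction. It turns the free-boundary problem into a shock-free $4\times4$ quasilinear hyperbolic system on a fixed interval, coupled to one ODE for the shock position, and then applies the known local well-posedness theory for such systems in $H^2$ (as in \cite{BCBook}, and as used for the Burgers and Saint-Venant shock problems in \cite{burger,sv}).

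\textbf{Step 1: fix the domain.} For $x\in[0,x_s(t)]$ we set $\xi=x/x_s(t)$. For $x\in[x_s(t),L]$ we set $\xi=(L-x)/(L-x_s(t))$. With these, $(\rho_f,z_f)(t,\xi)$ and $(\rho_c,z_c)(t,\xi)$ are both defined on $[0,1]$, and the shock sits at $\xi=1$ for both. Rewriting (\ref{arz}) in these coordinates gives a $4\times4$ quasilinear system in $U=(\rho_f,z_f,\rho_c,z_c)^\top$. Its coefficients depend on $U$, $x_s$ and $\dot x_s$, and $\dot x_s$ is itself given by (\ref{sh}) as a function of the traces of $U$ at $\xi=1$. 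The scalings $1/x_s$ and $1/(L-x_s)$ produce exactly the ratios $x_i$ appearing in the theorem.

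\textbf{Step 2: Riemann coordinates and boundary conditions.} We linearize around the steady state and diagonalize with Riemann-type coordinates built from the eigenvectors associated with $\widetilde\lambda_i$. At the steady state the characteristic speeds in $\xi$ are $\lambda_1/x_s^*$ and $\lambda_2/x_s^*$, both positive, which correspond to information entering at $\xi=0$. On the congested side they are $\lambda_3/(L-x_s^*)$ and $-\lambda_4/(L-x_s^*)$ with the sign flip, which gives one incoming component at each end. Hence exactly two incoming conditions are needed at $\xi=0$ and two at $\xi=1$.

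\emph{Conditions at $\xi=0$.} These come from (\ref{boun}): the data $\rho(t,0)$, $z(t,0)$, $z(t,L)$, with $\rho(t,L)$ appearing through $G_4$. The condition involving $z(t,L)$ is solvable for the incoming congested characteristic because $G_4'(0)=-\lambda_4\neq 0$ makes the relevant Jacobian invertible. We check this by the implicit function theorem.

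\emph{Conditions at $\xi=1$.} These come from the two Rankine--Hugoniot relations. We use (\ref{e:rhc}), equality of $z/\rho$, together with the first equation of (\ref{rh1}) after eliminating $\dot x_s$. These are solved for the incoming components in terms of the outgoing ones. This again uses the implicit function theorem, with invertibility coming from $\rho_f^*<\rho_c^*$ and the sign structure (\ref{eig1})--(\ref{eig2}).

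The resulting boundary maps are $C^2$, vanish at $0$, and depend on $x_s-x_s^*$ through $G$.

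\textbf{Step 3: local existence for the coupled system.} The transformed problem is a quasilinear hyperbolic system with nonzero characteristic speeds near equilibrium and nonlocal but $C^2$ boundary couplings, coupled to the ODE $\dot x_s=F(U(t,1))$. The first-order compatibility conditions transfer from the original data. We construct the solution by a fixed-point argument.

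1. Given $x_s\in C^1$ close to $x_s^*$, solve the hyperbolic IBVP in $C^0([0,T];H^2)$ using the standard theorem with the estimate $\|U(t)\|_{H^2}\le C(T)\big(\|U^0\|_{H^2}+|x_s^0-x_s^*|\big)$.
2. Update $x_s$ through the ODE, using that traces of $H^2$ functions are $C^1$ in time, so $x_s\in C^1$.
3. Show the map is a contraction in a lower norm ($C^0([0,T];H^1)$ for $U$, $C^0$ for $x_s$) on a ball that is bounded in the high norm.

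For $\delta(T)$ small this yields a unique solution on $[0,T]$ satisfying the stated estimate. Transforming back gives $(\rho,z)$ in the claimed spaces, and uniqueness transfers because the change of variables is a $C^1$ diffeomorphism.

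The main obstacle is Step 3, specifically closing the $H^2$ estimate for the coupled system. Two features break the standard framework. The coefficients depend on $\dot x_s$, hence on boundary traces of $U$, so $\partial_t$ of the equation involves $\ddot x_s$ and therefore traces of $\partial_t U$. The boundary conditions also depend on $x_s$. To handle this, we would first estimate $\partial_t U$ and $\partial_{tt}U$ in $L^2$, following the approach of \cite{burger}. Since $\partial_t U$ at $\xi=1$ is controlled by $H^2$ norms through the equation, the ODE contributes only lower-order, Lipschitz terms. A Gronwall argument on $[0,T]$ with constant $C(T)$ then closes the estimate.
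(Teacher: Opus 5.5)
Your overall route (normalize the two subdomains, pass to Riemann invariants, reduce to a $4\times4$ IBVP coupled to the shock ODE, then adapt the local $H^2$ theory of \cite[Appendix B]{BCBook}) is the paper's route. Using $[0,1]$ instead of $[0,x_s^*]$ is cosmetic. The genuine gap is the boundary structure in Step 2, which has to be right before any standard theorem applies.

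\textbf{The boundary count is wrong.} Your own speed computation gives positive speeds for $u_1,u_2,u_3$ and a negative speed only for $u_4$. So three conditions are needed at $\xi=0$ and exactly one at the shock $\xi=1$, not ``two and two''. At $\xi=0$ you do list three conditions from (\ref{boun}), which is correct. At $\xi=1$, however, you impose both Rankine--Hugoniot relations as boundary conditions ``solved for the incoming components''. There is only one incoming component there, so you end up with five conditions for a $4\times4$ system, and the IBVP is overdetermined.

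\textbf{What is missing.} The relations (\ref{rh1}) contain the extra unknown $\dot x_s$. They are equivalent to the pair (\ref{sh}), (\ref{e:rhc}):
\begin{itemize}
\item (\ref{sh}) is the shock ODE, which you already use in Step 3. Reusing the first relation of (\ref{rh1}) as a boundary condition is double counting: after eliminating $\dot x_s$ it is either vacuous or reduces to (\ref{e:rhc}).
\item Only (\ref{e:rhc}) is a condition on the traces.
\end{itemize}
This is exactly the paper's formulation: (\ref{boun3}) determines $u_4(t,x_s^*)$ from $u_1,u_2,u_3$ at the shock, (\ref{boun21}) prescribes $u_1,u_2,u_3$ at $x=0$, and (\ref{shock2}) drives $x_s$.

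\textbf{The nondegeneracy conditions you cite must also be replaced.}
\begin{itemize}
\item Solving (\ref{boun3}) for $u_4$ requires the coefficient $\rho_f^*-z_f^*/\widetilde\lambda_4$ (the denominator of $R_i$) to be nonzero. Using (\ref{czr}) and $\widetilde\lambda_4=\lambda_4+p(\rho_c^*)+\rho_c^*p'(\rho_c^*)$, it equals $\rho_f^*\rho_c^*p'(\rho_c^*)/\widetilde\lambda_4>0$. So this comes from $p'>0$. By contrast, $\rho_f^*\neq\rho_c^*$ is what keeps the shock speed (\ref{sh}) well defined near equilibrium.
\item At $\xi=0$, the $z(t,L)$ law is solvable for $u_3(t,0)$ as long as the linearized coefficient of $u_3(t,0)$ in it is nonzero. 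This has nothing to do with $G_4'(0)\neq0$; with $G_4\equiv0$ it is trivially solvable. The specific value in (\ref{boun11}) serves instead to remove the first-order dependence on $u_4(t,0)$, cf. (\ref{boun22}).
\end{itemize}

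\textbf{The contraction norm needs adjusting.} The coefficients of the transformed system contain $\dot x_s$, so the difference of two solutions $U$ is driven by the difference of the $\dot x_s$'s. Your contraction should therefore measure $x_s$ in $C^1$ rather than $C^0$. This is harmless, since the new $\dot x_s$ is a Lipschitz function of the trace $U(t,1)$, which your $C^0([0,T];H^1)$ norm controls.
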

\begin{proof}
	We use following change of variables
	\begin{equation}
		\begin{aligned}
			\rho_f(t,x)=&\rho\left(t,x\frac{x_s}{x_s^*}\right),\,z_f(t,x)=z\left(t,x\frac{x_s}{x_s^*}\right),\\
			\rho_c(t,x)=&\rho\left(t,L+x\frac{x_s-L}{x_s^*}\right),\\
            z_c(t,x)=&z\left(t,L+x\frac{x_s-L}{x_s^*}\right),\\
		\end{aligned}
		\label{sbst}
	\end{equation}
	in order to reduce the spatial domain to a fixed interval.
    Define the deviations
	\begin{equation}
		\rho^f\!=\!\rho_f\!-\!\rho_f^*,\ z^f\!=\!z_f\!-\!z_f^*,\ \rho^c\!=\!\rho_c\!-\!\rho_c^*,\ \rho^c\!=\!\rho_c\!-\!\rho_c^*.
		\label{devia}
	\end{equation}
	Then the system (\ref{arz}), \eqref{sh}--(\ref{rh}) becomes equivalent to the $4\times4$ system defined in $\mathbb{R}^+\times[0,x_s^*]$:
	\begin{strip} \begin{equation}
		\begin{cases}
				\partial_t\rho^f\! -\! \left(x\frac{\dot{x_s}}{x_s^*}\! +\! p(\rho^f+\rho_f^*)\! +\! (\rho^f\! +\! \rho_f^*)p'(\rho^f\! +\! \rho_f^*)\right)\frac{x_s^*}{x_s}\partial_x\rho^f\! +\! \frac{x_s^*}{x_s}\partial_xz^f=0,\vspace{0.1cm}\\
				\partial_tz^f\! +\! \left(\frac{2(z^f\! +\! z_f^*)}{\rho^f+\rho_f^*}\! -\! p(\rho^f+\rho_f^*)\! -\! x\frac{\dot{x_s}}{x_s^*}\right)\frac{x_s^*}{x_s}\partial_xz^f\! -\! \left(\frac{(z^f\! +\! z_f^*)^2}{(\rho^f\! +\! \rho_f^*)^2}\! +\! (z^f\! +\! z_f^*)p'(\rho^f\! +\! \rho_f^*)\right)\frac{x_s^*}{x_s}\partial_x\rho^f=0,\vspace{0.1cm}\\
				\partial_t\rho^c\! +\! \left(x\frac{\dot{x_s}}{x_s^*}\! +\! p(\rho^c\! +\! \rho_c^*)\! +\! (\rho^c\! +\! \rho_c^*)p'(\rho^c\! +\! \rho_c^*)\right)\frac{x_s^*}{L\! -\! x_s}\partial_x\rho^c\! -\! \frac{x_s^*}{L\! -\! x_s}\partial_xz^c=0,\vspace{0.1cm}\\
				\partial_tz^c\! -\! \left(\frac{2(z^c\! +\! z_c^*)}{\rho^c\! +\! \rho_c^*}\! -\! p(\rho^c\! +\! \rho_c^*)\! -\! x\frac{\dot{x_s}}{x_s^*}\right)\frac{x_s^*}{L\! -\! x_s}\partial_xz^c\! +\! \left(\frac{(z^c\! +\! z_c^*)^2}{(\rho^c\! +\! \rho_c^*)^2}\! +\! (z^c\! +\! z_c^*)p'(\rho^c\! +\! \rho_c^*)\right)\frac{x_s^*}{L\! -\! x_s}\partial_x\rho^c=0,
			\end{cases}\hspace{-0.4cm}
			\label{4th}
	\end{equation}
   \end{strip}
	where
	\begin{equation}
    \begin{aligned}
		\dot{x_s}=&\frac{1}{{\rho^c+\rho_c^*-\rho^f-\rho_f^*}}\Big[z^c-z^f +z_c^*-\! z_f^*\\
        &-\left(\rho^c+\! \rho_c^*\right)p\left(\rho^c+\rho_c^*\right)+(\rho^f+\! \rho_f^*)p(\rho^f+\rho_f^*)\Big],
		\label{shock1}
        \end{aligned}
	\end{equation}
	the corresponding boundary condition \eqref{e:rhc} at $x^*_s$ becomes
	\begin{equation}
\frac{z^c+z_c^*}{\rho^c+\rho_c^*}=\frac{z^f+z_f^*}{\rho^f+\rho_f^*}.
		\label{boun1}
	\end{equation}
	Next, we introduce the following Riemann coordinates:
	\begin{equation}
		\mathbf{u}=(u_1,u_2,u_3,u_4)^{\top}=\begin{pmatrix}
			S_1&0\\
			0&S_2
		\end{pmatrix}(\rho^f,z^f,\rho^c,z^c)^{\top},
		\label{rco}
	\end{equation}
	where
	$$S_1=\begin{pmatrix}
		\frac1{\widetilde\lambda_1}&\frac1{\widetilde\lambda_2}\\
		1&1
	\end{pmatrix}^{-1},\quad S_2=\begin{pmatrix}
		\textcolor{black}{\frac1{\widetilde\lambda_3}}&\frac1{\widetilde\lambda_4}\\
		1&1
	\end{pmatrix}^{-1}.
	$$
	Then the system (\ref{4th}) can be rewritten as
	\begin{equation}
		\mathbf{u}_t+\left(\Lambda(x_s)+A(\mathbf{u},x_s)+x\dot{x_s}B(x_s)\right)\mathbf{u}_x=0,
		\label{4th2}
	\end{equation}
	where
	\begin{equation}
		\Lambda=\text{diag}\left(\frac{x_s^*}{x_s}\lambda_1,\frac{x_s^*}{x_s}\lambda_2,\frac{x_s^*}{L-x_s}\lambda_3,-\frac{x_s^*}{L-x_s}\lambda_4\right)
		\label{diag}
	\end{equation}
	and $A,\ B\in C^2$ can be acquired by calculation, $A(\mathbf0,x_s)=0.$ The time derivative of shock (\ref{shock1}) becomes
	\begin{align}
    \label{shock2}
		\dot{x_s}=&\frac{1}{\frac{u_4}{\widetilde\lambda_4}\textcolor{black}{+}\frac{u_3}{\widetilde\lambda_3}\!+\!\rho_c^*\!-\!\frac{u_1}{\widetilde\lambda_1}\!-\!\!\frac{u_2}{\widetilde\lambda_2}\!-\!\rho_f^*}\!\Big[u_3\! +\! u_4\! -\! u_1\! -\! u_2\! +\! z_c^*\! -\! z_f^*\! \nonumber\\
        &-\! \left(\frac{u_4}{\widetilde\lambda_4} \textcolor{black}{+}\frac{u_3}{\widetilde\lambda_3}\! +\!\! \rho_c^*\!\!\right) p\! \left(\frac{u_4}{\widetilde\lambda_4} \textcolor{black}{+} \frac{u_3}{\widetilde\lambda_3}\! +\! \rho_c^*\right)\nonumber\\
        &+\! \left(\!\!\frac{u_1}{\widetilde\lambda_1}\! +\! \frac{u_2}{\widetilde\lambda_2}\!\! +\!\! \rho_f^*\!\right) p\! \left(\!\frac{u_1}{\widetilde\lambda_1}\! + \frac{u_2}{\widetilde\lambda_2}\!\! +\!\! \rho_f^*\!\right)\Big].
        \end{align}
By (\ref{rco}), the boundary condition (\ref{boun1}) on $x=x^*_s$ becomes
	\begin{equation}
    \begin{aligned}
		&\rho_f^*(u_3+u_4)-\rho_c^*(u_1+u_2)\\
        +&z_c^*\left(\frac{u_1}{\widetilde\lambda_1}+\frac{u_2}{\widetilde\lambda_2}\right)-z_f^*\left(\frac{u_4}{\widetilde\lambda_4}\textcolor{black}{+}\frac{u_3}{\widetilde\lambda_3}\right)=O(|\mathbf{u}(t,x_s^*)|^2).
		\label{boun3}
        \end{aligned}
	\end{equation}
	The boundary condition (\ref{boun}) can be rewritten as
	\begin{equation}
		\begin{pmatrix}
			u_1(t,0)\\
			u_2(t,0)\\
			u_3(t,0)\end{pmatrix}=\mathcal{B}\begin{pmatrix}
			\begin{pmatrix}
				u_1(t,x_s^*)\\
				u_2(t,x_s^*)\\
				u_3(t,x_s^*)\end{pmatrix},u_4(t,0),x_s-x_s^*\end{pmatrix},
		\label{boun21}
	\end{equation}
	by (\ref{sbst}), (\ref{devia}), (\ref{rco}) and (\ref{boun3}), where $\mathcal{B}=(B_1,B_2,B_3)^\top:\ \mathbb{R}^3\times\mathbb{R}\times\mathbb{R}\to\mathbb{R}^3$, $\mathcal{B}\in C^2$ and
    $\partial_2\mathcal{B}(\mathbf0,0,0)\equiv0$.
Moreover, noticing \eqref{boun11} and using the Implicit Function Theorem,
	\begin{equation}
		B_3=\mathcal{F}\left(u_4(t,0),G_3(\mathbf{u}(t,x_s^*),x_s)\right),
		\label{implicit}
	\end{equation}
	in the neighborhood of $\mathbf{u}=0$ and
	\begin{equation}
		\mathcal{F}(0,0)=0,\,\partial_1\mathcal{F}(0,0)=0,\,\partial_2\mathcal{F}(0,0)=\frac{\widetilde\lambda_3}{\widetilde\lambda_3+\widetilde\lambda_4}.
		\label{boun22}
	\end{equation}
	The initial condition (\ref{ini}) can be transformed as
	\begin{equation}
		\mathbf{u}^0(x)=\left(u_1^0(x),u_2^0(x),u_3^0(x),u_4^0(x)\right)^\top,\,x_s(0)=x_s^0,
		\label{ini2}
	\end{equation}
	satisfying the first-order compatibility condition determined by (\ref{boun21}). Thus, we obtain that the well-posedness of the system (\ref{arz}), \eqref{sh}--(\ref{rh}), (\ref{boun}) and (\ref{ini}) is equivalent to the well-posedness of the system (\ref{4th2}), (\ref{shock2}), (\ref{boun3}), (\ref{boun21}) and (\ref{ini2}).To prove Theorem \ref{wpdthm} and obtain the well-posedness of the system of (\ref{arz}), \eqref{sh}--(\ref{rh}), (\ref{boun}) and (\ref{ini}), we just need to introduce the following lemma whose proof can be obtained by adaption of \cite[Appendix B]{BCBook}.
    \begin{lemma}\label{lm-ini}
		For each $T>0,$ there exists $\delta(T)>0$ s.t. for arbitrary $x_s^0\in(0,L),\ \mathbf{u}_0\in H^2((0,x_s^*);\mathbb{R}^4)$ satisfying the first-order compatibility condition and
		\begin{equation}\label{ineql1}
			\begin{aligned}
				|\mathbf{u}_0|_{H^2\left((0,x_s^*);\mathbb{R}^4\right)}+|x_s^0-x_s^*|&\leqslant\delta(T),
			\end{aligned}
		\end{equation}
		the system $(\ref{4th2}),\ (\ref{shock2}),\ (\ref{boun3}),\ (\ref{boun21})$ and $(\ref{ini2})$ has a unique solution $\mathbf{u}\in C^0\left([0,T];H^2\left((0,x_s^*);\mathbb{R}^4\right)\right)$, $x_s\in C^1\left([0,T]\right)$. Moreover, for any $t\in[0,T]$ it is valid that
		\begin{equation}\label{iniT}
        \begin{aligned}
			&|\mathbf{u}(t,\cdot)|_{H^2\left((0,x_s^*);\mathbb{R}^4\right)}+|x_s(t)-x_s^*|\\
            &\leqslant C(T)\left(|\mathbf{u}_0|_{H^2\left((0,x_s^*);\mathbb{R}^4\right)}+\left|x_s^0-x_s^*\right|\right).
            \end{aligned}
		\end{equation}
	\end{lemma}
    This completes the proof of Theorem \ref{thm02}.
	\end{proof}
    	\begin{remark}
		Obviously from $(\ref{boun21})$, the value of $\mathcal{B}$ only depends on $u_i(t,x_s^*),\ i=1,\ 2,\ 3,\ u_4(t,0)$ and $x_s-x_s^*$, which is consistent with the former conclusion that $u_4(t,x_s^*)$ can be regarded as a function of $u_i(t,x_s^*),\ i=1,\ 2,\ 3$ from $(\ref{boun3})$.
	\end{remark}
\section{The exponential stability of steady state in $H^2$ norm}
\label{stb-pf}
This section is devoted to the proof of our main result--Theorem \ref{stbthm}.

Firstly we should point out that, it is only necessary to prove the exponential stability of the null-steady state of the system (\ref{4th2}), (\ref{shock2}), (\ref{boun3}) and (\ref{boun21}) in $H^2$ as a result of the equivalence between the system (\ref{arz}), \eqref{sh}--(\ref{rh}) and (\ref{boun}) and the system (\ref{4th2}), (\ref{shock2}), (\ref{boun3}).
	
	Introduce the following Lyapunov functions
	\begin{equation}
		V(\mathbf{u},x_s)=\sum_{i=1}^3V_i(\mathbf{u})+\sum_{i=4}^6V_i(\mathbf{u},x_s),
		\label{Lv}
	\end{equation}
	where
	\begin{align}
    \label{Lv1}	
V_1=&\int_0^{x_s^*}\sum_{i=1}^4p_i\exp\left(-\frac{\mu x}{x_i\lambda_i}\right)u_i^2dx,\\	
\label{Lv2}	V_2=&\int_0^{x_s^*}\sum_{i=1}^4p_i\exp\left(-\frac{\mu x}{x_i\lambda_i}\right)u_{it}^2dx,\\
\label{Lv3}				V_3=&\int_0^{x_s^*}\sum_{i=1}^4p_i\exp\left(-\frac{\mu x}{x_i\lambda_i}\right)u_{itt}^2dx,
\end{align}
\begin{align}
\label{Lv4}	V_4=&\int_0^{x_s^*}\sum_{i=1}^3\frac{p_i'}{\lambda_i}\exp\left(-\frac{\mu x}{x_i\lambda_i}\right)(x_s-x_s^*)u_idx\\
&+C_0(x_s-x_s^*)^2,\nonumber\\
		\label{Lv5}		V_5=&\int_0^{x_s^*}\sum_{i=1}^3\frac{p_i'}{\lambda_i}\exp\left(-\frac{\mu x}{x_i\lambda_i}\right)\dot{x_s}u_{it}dx+C_0(\dot{x_s})^2,\\
		\label{Lv6}		V_6=&\int_0^{x_s^*}\sum_{i=1}^3\frac{p_i'}{\lambda_i}\exp\left(-\frac{\mu x}{x_i\lambda_i}\right)\ddot{x_s}u_{itt}dx+C_0(\ddot{x_s})^2,
	\end{align}
	where $p_i,\ C_0,\ p_i'$ are constants and $p_i>0,\ C_0>\frac32$.

    In the following we denote for simplicity
$|\mathbf{u}|_{H^2} := |\mathbf{u}(t, \cdot)|_{H^2((0, x^*_s); \mathbb{R}^4)}$
in the computations.
We propose the following lemma of the equivalence between the Lyapunov functions $V$ defined in (\ref{Lv})--(\ref{Lv6}) and $(|\mathbf{u}|_{H^2}+\left|x_s-x_s^*\right|)^2$.
	\begin{lemma}
		The Lyapunov functions $V$ defined in (\ref{Lv})--(\ref{Lv6}) is equivalent to the norm $(|\mathbf{u}|_{H^2}+\left|x_s-x_s^*\right|)^2$, if  $|\mathbf{u}|_{H^2}+\left|x_s-x_s^*\right|$ is small enough and the constants $p_i,\ p_i'$ satisfy that
		\begin{equation}
			\max_{i=1,2,3}\left\{\frac{p_i'^2x_i}{\mu\lambda_ip_i}\left(1-\exp\left(-\frac{\mu x_s^*}{x_i\lambda_i}\right)\right)\right\}<2.
			\label{const}
		\end{equation}
		\label{lm-eqv}
	\end{lemma}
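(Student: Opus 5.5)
The strategy is to bound each of $V_4, V_5, V_6$ from above and below by a quadratic form built from the matching $V_1, V_2, V_3$ and a power of the shock deviation. Two facts make this work. First, the exponential weights $\exp(-\mu x/(x_i\lambda_i))$ lie between two positive constants on $[0,x_s^*]$. Second, in the $H^2$ regime, $u_t, u_{tt}$ are controlled by $u_x, u_{xx}$, and conversely; this comes from (\ref{4th2}), using $\Lambda(x_s)+A+x\dot{x_s}B$ invertible near $\mathbf{u}=0$. Consequently $V_1+V_2+V_3$ is equivalent to $|\mathbf{u}|_{H^2}^2$ for $|\mathbf{u}|_{H^2}+|x_s-x_s^*|$ small.

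Next I express the shock derivatives through boundary values, using the shock equation (\ref{shock2}). Since the denominator stays near $\rho_c^*-\rho_f^*\ne0$, we get $|\dot{x_s}|\le C|\mathbf{u}(t,x_s^*)|$. Differentiating in time gives $|\ddot{x_s}|\le C(|\mathbf{u}_t(t,x_s^*)|+|\mathbf{u}(t,x_s^*)|^2)$. The Sobolev embedding $H^1\subset C^0$ then bounds these by $C|\mathbf{u}|_{H^2}$. Hence the terms $C_0\dot{x_s}^2$ and $C_0\ddot{x_s}^2$ are bounded above by $C|\mathbf{u}|_{H^2}^2$. They are also nonnegative, so they do no harm in the lower bound.

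The key step is the cross term, e.g.\ $\int_0^{x_s^*}\frac{p_i'}{\lambda_i}e^{-\mu x/(x_i\lambda_i)}(x_s-x_s^*)u_i\,dx$. For each $i$ I apply Cauchy--Schwarz with weight $e^{-\mu x/(x_i\lambda_i)}$ and then Young's inequality with a parameter $\varepsilon_i$:
\[
\Big|\int_0^{x_s^*}\tfrac{p_i'}{\lambda_i}e^{-\frac{\mu x}{x_i\lambda_i}}(x_s-x_s^*)u_i\Big|
\le \frac{\varepsilon_i p_i}{2}\int_0^{x_s^*} e^{-\frac{\mu x}{x_i\lambda_i}}u_i^2
+\frac{p_i'^2}{2\varepsilon_i\lambda_i^2 p_i}\Big(\int_0^{x_s^*}e^{-\frac{\mu x}{x_i\lambda_i}}dx\Big)(x_s-x_s^*)^2 .
\]
The weight integral equals $\frac{x_i\lambda_i}{\mu}\left(1-e^{-\mu x_s^*/(x_i\lambda_i)}\right)$. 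The coefficient of $(x_s-x_s^*)^2$ therefore becomes $\frac{1}{2\varepsilon_i}\frac{p_i'^2x_i}{\mu\lambda_ip_i}\left(1-e^{-\mu x_s^*/(x_i\lambda_i)}\right)$, which by (\ref{const}) is strictly less than $1/\varepsilon_i$. Choosing $\varepsilon_i<1$, summing over $i=1,2,3$, and using $C_0>\frac32$ leaves a strictly positive multiple of $(x_s-x_s^*)^2$ and a positive multiple of $V_1$. This requires that $\sum_i 1/\varepsilon_i$-type contributions stay below $C_0$; this bookkeeping of constants, i.e.\ checking that (\ref{const}) together with $C_0>\frac32$ leaves enough room, is the step I expect to be the main obstacle. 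The upper bound follows from the same inequality with any fixed $\varepsilon_i$.

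The same argument applied to $V_5$ (with $\dot{x_s}, u_{it}$) and to $V_6$ (with $\ddot{x_s}, u_{itt}$) gives
\[
V_5+V_6\ge c(V_2+V_3)+c(\dot{x_s}^2+\ddot{x_s}^2)\ge c(V_2+V_3)
\]
and a corresponding upper bound by $C(V_2+V_3)+C|\mathbf{u}|_{H^2}^2$. Combining all of this, $c(|\mathbf{u}|_{H^2}^2+|x_s-x_s^*|^2)\le V\le C(|\mathbf{u}|_{H^2}^2+|x_s-x_s^*|^2)$, which is equivalent to $(|\mathbf{u}|_{H^2}+|x_s-x_s^*|)^2$ for small data.
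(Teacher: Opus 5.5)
You follow the route the paper has in mind: weighted Cauchy--Schwarz and Young on the cross terms, the PDE to trade $u_t,u_{tt}$ for $u_x,u_{xx}$, and (\ref{shock2}) with $H^1\subset C^0$ to bound $\dot{x_s},\ddot{x_s}$. The paper itself only says the proof uses Young and Cauchy--Schwarz repeatedly. The gap is in the one step where (\ref{const}) and $C_0>\frac32$ are supposed to enter, the bookkeeping step you flagged, and it fails as you set it up. Write $a_i$ for the quantity inside the max in (\ref{const}). Your Young split gives
\begin{align*}
V_1+V_4\ \ge\ &\sum_{i=1}^3\Bigl(1-\frac{\varepsilon_i}{2}\Bigr)p_i\int_0^{x_s^*}e^{-\frac{\mu x}{x_i\lambda_i}}u_i^2\,dx\\
&+p_4\int_0^{x_s^*}e^{-\frac{\mu x}{x_4\lambda_4}}u_4^2\,dx\\
&+\Bigl(C_0-\sum_{i=1}^3\frac{a_i}{2\varepsilon_i}\Bigr)(x_s-x_s^*)^2 ,
\end{align*}
so you need $\varepsilon_i<2$ and $\sum_i a_i/(2\varepsilon_i)<C_0$. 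Choosing $\varepsilon_i<1$ goes the wrong way. Then $\sum_i a_i/(2\varepsilon_i)>\frac12\sum_i a_i$, which (\ref{const}) only bounds by $3$, while $C_0$ is only assumed to exceed $\frac32$. For example, $a_i=1.9$ and $C_0=1.6$ make the coefficient of $(x_s-x_s^*)^2$ negative. Knowing that each coefficient is below $1/\varepsilon_i$ does not help, because $\sum_i 1/\varepsilon_i>3$ in that regime.

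The repair is to let the Young weight on the $u_i$ side approach the full $p_i$-term, i.e.\ let $\varepsilon_i\uparrow 2$. Then $\sum_i a_i/(2\varepsilon_i)\to\frac14\sum_i a_i<\frac32<C_0$. So any common $\varepsilon_i=\varepsilon\in\bigl(\frac{1}{2C_0}\sum_j a_j,\,2\bigr)$ works; this interval is nonempty because $\sum_j a_j<6<4C_0$. It leaves positive multiples of $V_1$ and of $(x_s-x_s^*)^2$. Equivalently, set $X^2=p_i\int_0^{x_s^*}e^{-\mu x/(x_i\lambda_i)}u_i^2\,dx$ and $Y=|x_s-x_s^*|$. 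The form $X^2-\sqrt{a_i}\,XY+\frac{C_0}{3}Y^2$ is positive definite iff $a_i<\frac{4C_0}{3}$, and $2<\frac{4C_0}{3}$. This is exactly why the thresholds in the statement are $2$ and $\frac32$. You need the same choice for $V_2+V_5$ and $V_3+V_6$.

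Separately, your inequality $V_5+V_6\ge c(V_2+V_3)+\dots$ cannot hold as written, since $V_5=0$ whenever $\dot{x_s}=0$. The argument actually gives $V_2+V_3+V_5+V_6\ge c(V_2+V_3)+c(\dot{x_s}^2+\ddot{x_s}^2)$. With these corrections the rest of your proposal goes through: the equivalence of $V_1+V_2+V_3$ with $|\mathbf{u}|_{H^2}^2$ for small data, and the upper bounds via $|\dot{x_s}|+|\ddot{x_s}|\le C|\mathbf{u}|_{H^2}$.
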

The proof relies on using repeatedly Young's Inquality and Cauchy-Schwarz Inequality, we omit the details here.

It is worth noticing that we can separate the first three components of $\mathbf{u}$ under the feedback control condition (\ref{boun21}) that are actually controlled as {\color{black}$\mathbf{v}=(u_1,u_2,u_3)^\top$}, since from (\ref{boun3}) the boundary condition of $u_4$ at $x=x_s^*$ acquired by that of $u_1,\ u_2$ and $u_3$. From (\ref{boun21}) and the fact that $\mathcal{B}\in C^2$, we have
\begin{equation}
\label{vboun}
\begin{aligned}
\mathbf{v}(t,0) =& \partial_1\mathcal{B}(\mathbf{0},0,0)\mathbf{v}\left(t,x_s^*\right)
+\partial_3\mathcal{B}(\mathbf{0},0,0)(x_s-x_s^*) \\
&+ O\!\left((|\mathbf{u}|_{H^2}+|x_s-x_s^*|)^2\right).
\end{aligned}
\end{equation}
By direct computations, we get
\begin{equation}
	\partial_1\mathcal{B}(\mathbf{0},0,0)=K,\  \partial_3\mathcal{B}(\mathbf{0},0,0)=(b_1,b_2,b_3)^\top,
	\label{p1B}
\end{equation}
where $K$ and $(b_1,b_2,b_3)^{\top}$ are given in (\ref{stbnt-K})--(\ref{stbnt-b}).

For given $\bar{T}>0,$ let $x_s^0\in(0,L),\ \mathbf{u}^0\in H^2\left((0,x_s^*);\mathbb{R}^4\right)$ satisfy the first-order compatibility condition and (\ref{ineql1}), and the solution of system (\ref{4th2}), (\ref{shock2}), (\ref{boun3}), (\ref{boun21}) and (\ref{ini2}) is $\mathbf{u}\in C^0\left([0,\bar{T}];H^2\left((0,x_s^*);\mathbb{R}^4\right)\right),\ x_s\in C^1\left([0,\bar{T}]\right)$. First we suppose that $\mathbf{u}\in C^3$, using integration by parts, then
\begin{equation}
\begin{aligned}
	\frac{dV_1}{dt}=&-\mu V_1-\left[\sum_{i=1}^4p_ix_i\lambda_i\exp\left(-\frac{\mu x}{x_i\lambda_i}\right)u_i^2\right]_0^{x_s^*}\\
    &+O\left((|\mathbf{u}|_{H^2}+\left|x_s-x_s^*\right|)^3\right),
	\label{dv1}
    \end{aligned}
\end{equation}
Taking the derivative of (\ref{4th2}) and through integration by parts, we have
$$\begin{aligned}
	&\frac{dV_4}{dt}=\!-\mu\left(V_4\!-\!C_0(x_s-x_s^*)^2\right)\!+\!O\left((|\textbf{u}|_{H^2}+|x_s-x_s^*|)^3\right)\\
    &-(x_s-x_s^*)\left[\sum_{i=1}^3p_i'x_i\exp\left(-\frac{\mu x}{x_i\lambda_i}\right)u_i(t,x)\right]_0^{x_s^*}\\
	&+\!\dot{x_s}\!\left[\!2C_0(x_s\!-\!x_s^*)\!+\!\!\int_0^{x_s^*}\sum_{i=1}^3\frac{p_i'}{\lambda_i}\exp\left(\!\!-\!\frac{\mu x}{x_i\lambda_i}\right)u_i(t,x)dx\!\right],
\end{aligned}$$
  From (\ref{boun3}) and using the notations defined in \eqref{notations}, the linearization of (\ref{shock2}) can be written in the following compact form
\begin{equation*}
\dot{x_s}=\sum_{i=1}^3\left(\textcolor{black}{-\frac{\lambda_i}{\widetilde\lambda_i}}+\textcolor{black}{\frac{s_i\lambda_4}{\widetilde \lambda_4}}R_i\right)\frac{u_i(x_s^*)}{\rho_c^*-\rho_f^*}:= \sum_{i=1}^3\Theta_i u_i(x_s^*),
\end{equation*}
then we obtain
$$
\begin{aligned}
	&\quad \frac{d(V_1+V_4)}{dt}=-\mu\left(V_1+V_4-C_0(x_s-x_s^*)^2\right)\\
    &-\left[\sum_{i=1}^4p_ix_i\lambda_i\exp\left(-\frac{\mu x}{x_i\lambda_i}\right)u_i^2\right]_0^{x_s^*}\\
	&-(x_s-x_s^*)\left[\sum_{i=1}^3p_i'x_i\exp\left(-\frac{\mu x}{x_i\lambda_i}\right)u_i\right]_0^{x_s^*}\\
	&+\!\!\left(\!\sum_{i=1}^3\! \Theta_iu_i(x_s^*)\! \! \right)\! \! \Bigg(\! \!2C_0(x_s\!\! \!-\! x_s^*) \!\!+\! \! \!\int_0^{x_s^*}\!\! \sum_{i=1}^3\frac{p_i'}{\lambda_i}\exp\! \left(\! \!-\frac{\mu x}{x_i\lambda_i}\!\!\right)u_idx\! \!\Bigg)\\
    &+O\left((|\mathbf{u}|_{H^2}+|x_s-x_s^*|)^3\right).
\end{aligned}
$$
Using the boundary conditions \eqref{boun3}, (\ref{vboun}) and noticing \eqref{p1B}, we obtain that

\begin{equation*}
\begin{split}
&\quad \frac{d(V_1+V_4)}{dt}= -\mu(V_1+V_4)  + O\Big((|\mathbf{u}|_{H^2}+|x_s-x_s^*|)^3\Big)\\
&-\!\mathbf{v}^\top\!(x_s^*)\big(F(x_s^*,\mu)\!-\!K^\top F(0,\mu)K\big)\mathbf{v}(x_s^*) \!+\! p_4x_4\lambda_4u_4^2(0)\\
&- p_4x_4\lambda_4\exp\!\left(-\frac{\mu x_s^*}{x_4\lambda_4}\right)\left(\sum_{i=1}^3 R_i s_i u_i(x_s^*)\right)^2 \\
&+ \!\!\mu C_0(x_s\!\!-\!\!x_s^*)^2\!\!\sum_{i=1}^3 \!p_i x_i \lambda_i b_i^2\!\! +\!\! 2\sum_{i=1}^3 p_i x_i \lambda_i b_i (x_s\!\!-\!x_s^*)\!\!\!\sum_{j=1}^3\! k_{ij}u_j(x_s^*) \\
&+ (x_s-x_s^*)^2\sum_{i=1}^3 p_i' x_i b_i \!\!-\!\! (x_s-x_s^*)\sum_{i=1}^3 p_i' x_i u_i(x_s^*)\exp\!\left(\!\!\!-\frac{\mu x_s^*}{x_i\lambda_i}\right)\\
& + (x_s-x_s^*)\sum_{i=1}^3 p_i' x_i \sum_{j=1}^3 k_{ij}u_j(x_s^*)+ 2C_0(x_s-x_s^*)\sum_{i=1}^3 \Theta_i u_i(x_s^*) \\
&\quad + \left(\sum_{i=1}^3 \Theta_i u_i(x_s^*)\right)\int_0^{x_s^*}\sum_{j=1}^3 \frac{p_j'}{\lambda_j}\exp\!\left(-\frac{\mu x}{x_j\lambda_j}\right)u_j dx,
\end{split}
\end{equation*}
where $F(x,\mu)\! =\! \mbox{diag}\left(\lambda_ip_ix_i\exp\left(\! -\frac{\mu x}{x_i\lambda_i}\right)\right)_{i\in\{1,2,3\}}$. Noting that except for the last term, other terms are all quadratic forms of $\left(\mathbf{v}^\top(x_s^*),u_4(0),x_s-x_s^*\right)$. Using Young's Inequality and Cauchy-Schwarz Inequality successively, the following holds
\begin{align*}
&\left(\sum_{i=1}^3\Theta_iu_i(x_s^*)\right)\int_0^{x_s^*}\frac{p_j'}{\lambda_j}\exp\left(\! -\frac{\mu x}{x_j\lambda_j}\right)u_jdx\\
&\leqslant \! \frac{1}{2\varepsilon_j}\left(\sum_{i=1}^3\Theta_iu_i(x_s^*)\right)^2\!\!\!+\!\!\frac{\varepsilon_j}2\! \left(\int_0^{x_s^*}\! \frac{p_j'}{\lambda_j}\exp\left(\! -\frac{\mu x}{x_j\lambda_j}\right)u_jdx\right)^2\\
&\leqslant\frac{\varepsilon_j}{2\mu}\left[\frac{p_j'^2x_j}{p_j\lambda_j}\left(\!\!1\!-\!\exp\left(\!-\frac{\mu x_s^*}{x_j\lambda_j}\!\right)\!\right)\!\right]\!\int_0^{x_s^*}\!\!p_j\!\exp\left(\!\!-\frac{\mu x}{x_j\lambda_j}\right)u_j^2dx\\
	&+\frac{1}{2\varepsilon_j}\left[\sum_{i=1}^3\Theta_i^2u_i^2(x_s^*)\left(\sum_{k=1}^3\exp\left(\frac{\mu x_s^*}{x_i\lambda_i}-\frac{\mu x_s^*}{x_k\lambda_k}\right)\right)\right].
\end{align*}
for each $j=1,2,3$.
Above all, we have
\begin{align*}
	&\frac{d(V_1+V_4)}{dt}\leqslant\! -\! \mu\! \left(V_1\! +\! V_4\right)\!+O\left((|\mathbf{u}|_{H^2}+|x_s-x_s^*|)^3\right) \\
    &-\mathbf{v}^{\top}\! (x_s^*)\! \Bigg(\! \! F(x_s^*,\mu)\! -\! K^\top\!  F(0,\mu)K\! -\! \frac{\sum_{k=1}^3\frac{1}{\varepsilon_k}}{2(\rho_c^*-\rho_f^*)^2}\! \widetilde{D}(\mu)\! \Bigg)\! \mathbf{v}\! (x_s^*)\\
    &+\! p_4x_4\lambda_4u_4^2(0)-p_4x_4\lambda_4\exp\left(-\frac{\mu x_s^*}{x_i\lambda_i}\right)\left(\sum_{i=1}^3R_is_iu_i(x_s^*)\right)^2\\
    &+(x_s-x_s^*)^2\Bigg(\mu C_0+\sum_{i=1}^3x_ib_i(p_i\lambda_i+p_i')\Bigg)\notag\\
	&+(x_s-x_s^*)\sum_{j=1}^3\! \! \Bigg[\! 2C_0\theta_j\! -p_j'x_j\exp\! \left(\! -\frac{\mu x_s^*}{x_j\lambda_j}\right)\\
    &\quad\quad\quad+\! \! \sum_{i=1}^3\! x_ik_{ij}\! \left(2p_i\lambda_ib_i\! +\! p_i'\right)\!  \Bigg]u_j(x_s^*)\notag\\
	&+\! \sum_{i=1}^3\! \left(\! 1\! -\! \exp\! \left(\! -\frac{\mu x_s^*}{x_i\lambda_i}\right)\! \right)\frac{\varepsilon_ip_i'^2x_i}{2\mu\lambda_ip_i}\int_0^{x_s^*}p_i\exp\left(-\frac{\mu x}{x_i\lambda_i}\right)u_i^2dx.
\end{align*}

To establish an exponential decay, denote by
\begin{equation}
\label{eps}
\frac1{\varepsilon_i}=\frac{p_i'^2x_i\left(1-\exp\left(-\frac{\mu x_s^*}{x_i\lambda_i}\right)\right)}{\mu^2\lambda_ip_i},\quad i=1,2,3.
\end{equation}
Then we obtain that
\begin{equation}
	\begin{aligned}
		&\frac{d(V_1+V_4)}{dt}\leqslant\! -\! \frac{\mu}2V_1\! -\! \mu V_4+O\left((|\mathbf{u}|_{H^2}+|x_s-x_s^*|)^3\right)\\
        &-\! \mathbf{v}^{\top}\! (x_s^*)\! \left(\! F(x_s^*,\mu)\! -\! K^\top F(0,\mu)K\! -\! \frac{\sum_{k=1}^3\frac{1}{\varepsilon_k}}{2(\rho_c^*\! -\! \rho_f^*)^2}\widetilde{D}(\mu)\! \right)\mathbf{v}(x_s^*)\\
		&+p_4x_4\lambda_4u_4^2(0)-p_4x_4\lambda_4\exp\left(\! -\frac{\mu x_s^*}{x_i\lambda_i}\right)\left(\sum_{i=1}^3R_is_iu_i(x_s^*)\right)^2\\
		&+(x_s-x_s^*)^2\Bigg(\mu C_0+\sum_{i=1}^3x_ib_i(p_i\lambda_i+p_i')\Bigg)\\
        &+(x_s-x_s^*)\sum_{j=1}^3\! \Bigg(\! 2C_0\Theta_j\! -\! p_j'x_j\\
		&+\! \! \sum_{i=1}^3\! x_ik_{ij}\! \left(2p_i\lambda_ib_i\! +\! p_i'\right)\exp\! \left(\! -\frac{\mu x_s^*}{x_j\lambda_j}\right)\! \! \Bigg)u_j(x_s^*).
	\end{aligned}
	\label{dv14-eps}
\end{equation}

It turns out that each term in (\ref{dv14-eps}) is a quadratic form of $\left(\mathbf{v}^\top(x_s^*),u_4(0),x_s-x_s^*\right)$, except the first two terms for exponential decay. Take
\begin{equation}
	p_i'=\frac{2C_0\Theta_i}{x_i}\exp\! \left(\! \frac{\mu x_s^*}{x_i\lambda_i}\! \right)\! ,\ i=1,2,3.
	\label{pi'}
\end{equation}
for further simplicity, and from (\ref{bi+})--(\ref{bi-}) it comes out that $b_ix_ip_i'<0,\ i=1,2,3.\ $ for arbitrary $\gamma>0$. Thus,
\begin{equation}
	p_i=-\frac{p_i'}{2\lambda_ib_i}>0,\quad i=1,2,3.
	\label{pi}
\end{equation}
Plug (\ref{pi'})--(\ref{pi}) into (\ref{dv14-eps}) and take Young's inequality,
\begin{equation}
\begin{aligned}
&\frac{d(V_1+V_4)}{dt} \leqslant -\frac{\mu}{2}V_1 - \mu V_4 +O\left((|\mathbf{u}|_{H^2}+|x_s-x_s^*|)^3\right)\\
&- \mathbf{v}^\top(x_s^*) \Bigg[ F(x_s^*,\mu) - K^\top F(0,\mu)K -\frac{\sum_{k=1}^3\frac{1}{\varepsilon_k}}{2(\rho_c^* - \rho_f^*)^2}\widetilde{D}(\mu)\\
&-\mathrm{diag}\left(3p_4x_4\lambda_4R_i^2\exp\left(\frac{\mu x_s^*}{x_i\lambda_i}\right)\right)_{i\in\{1,2,3\}}\Bigg]\mathbf{v}(x_s^*) \\
& + p_4x_4\lambda_4u_4^2(0) + (x_s-x_s^*)^2\left(\mu C_0+\frac12\sum_{i=1}^3x_ib_ip_i'\right).
\end{aligned}
\label{dv14-als}
\end{equation}

Since the inequality in (\ref{bi+})--(\ref{bi-}) is strict, there exists $\mu>\gamma$ such that the inequality (\ref{dv14-als}) also checks out to $\mu$. Take such $\mu$ and use (\ref{pi'}),
\begin{equation}
	\mu C_0+\frac12\sum_{i=1}^3x_ib_ip_i'<0.
	\label{ineq^2}
\end{equation}
Moreover, \textcolor{black}{ by (\ref{p1B}), (\ref{eps}), (\ref{pi'})--(\ref{pi})} and the positive definite condition of matrix (\ref{+dft}), we have that the matrix
$$F(x_s^*,\mu)-K^\top F(0,\mu)K-\frac{\left(\sum_{k=1}^3\frac{1}{\varepsilon_k}\right)}{2\left(\rho_c^*-\rho_f^*\right)^2}\widetilde{D}(\mu)$$
is positive definite. Therefore, there exists $p_4>0$ that the quadratic form of $\mathbf{v}(x_s^*)$ in (\ref{dv14-als}) satisfying
\begin{equation*}
\begin{aligned}
&\mathbf{v}^{\top}(x_s^*)\Bigg[F(x_s^*,\mu)\! -K^\top F(0,\mu)K-\frac{\mu\sum_{k=1}^3\frac{1}{\varepsilon_k}}{2\left(\rho_c^*-\rho_f^*\right)^2}\\
	&\, \widetilde{D}-\text{diag}\left(3p_4x_4\lambda_4R_i^2\exp\left(\frac{\mu x_s^*}{x_i\lambda_i}\right)_{i\in\{1,2,3\}}\right)\Bigg]\mathbf{v}(x_s^*)<0,
\end{aligned}
\end{equation*}
and
\begin{equation}
	\frac{d(V_1+V_4)}{dt}\!\!\leqslant\!\!-\!\frac{\mu}2V_1\!-\!\mu V_4\!+\!O\left((|\mathbf{u}|_{H^2}\!+\!|x_s\!-\!x_s^*|)^3\right).
\end{equation}
Since $\mu>\gamma$, from Lemma \ref{lm-ini}, it can be ensured that $|\mathbf{u}|_{H^2}+|x_s-x_s^*|$ approaches to $0$ infinitely if $\delta\left(\bar{T}\right)$ is small enough. Thus,
\begin{equation}
	\frac{d(V_1+V_4)}{dt}\leqslant-\frac{\gamma}2(V_1+ V_4).
\end{equation}
With the same process omitted here, we can also derive $\dot V_2$, $\dot V_3$, $\dot V_5$ and $\dot V_6$.
It is worth noticing that from (\ref{Lv1})--(\ref{Lv6}), both $V_2+V_5$ and $V_3+V_6$ share a similar structure to $V_1+V_4$, so we
omit the details for the estimation of $V_2+V_5$ and $V_3+V_6$ here.

Under the assumption that each of (\ref{4th2}), (\ref{shock2}), (\ref{boun3}) and (\ref{boun21}) is $C^3$, we obtain
\begin{equation}
	\frac{dV}{dt}\leqslant-\frac{\gamma}2V.
	\label{dvdc}
\end{equation}
By using the density argument similarly as in \cite{burger}, and since $\gamma$ is independent of the $C^2$ or $C^3$ norm of $\mathbf{u}$, (\ref{dvdc}) also satisfies for arbitrary
$$(\mathbf{u},x_s)\in C^0\left(\left[0,\Bar{T}\right];H^2\left((0,x_s^*);\mathbb{R}^4\right)\right)\times C^1\left(\left[0,\Bar{T}\right];\mathbb{R}\right).$$
By Lemma \ref{lm-eqv}, the Lyapunov functions defined by (\ref{Lv})-(\ref{Lv6}) is equivalent with $\left(|\mathbf{u}|_{H^2}+|x_s-x_s^*|\right)^2$, when  $\left(|\mathbf{u}|_{H^2}+|x_s-x_s^*|\right)^2$ is small enough, and now we have already obtained that the null-steady state of the system (\ref{4th2}), (\ref{shock2}), (\ref{boun3}) and (\ref{boun21}) decays in a rate of $\frac{\gamma}4$ in $H^2$ norm. It is essential to check whether the assumptions (\ref{const}) and (\ref{bi+})-(\ref{bi-}) is valid under the values of (\ref{pi'}) and (\ref{pi}). From (\ref{pi'}), (\ref{pi}),
$$\max_{i}\left\{\frac{p_i'x_i}{\mu\lambda_ip_i}\left(1-\exp\left(-\frac{\mu x}{x_i\lambda_i}\right)\right)\right\}<\frac{4C_0}3,$$
therefore take $C_0<\frac32$ then (\ref{const}).

We notice that up to now $\delta(\Bar{T})$ is dependent on $\Bar{T}$, compared to the Definition \ref{stbdef}, it remains to prove that for arbitrary $T>0$, there exists a consistent $\delta^*>0$ such that (\ref{dvdc}) holds on $(0,T)$. Assume that $x_s^0\in(0,L)$, $\mathbf{u}^0\in H^2\left((0,x_s^*);\mathbb{R}^4\right)$ both satisfy the first-order compatibility condition, and
\begin{equation}\label{dlt0-1}	\left|\mathbf{u}^0\right|_{H^2}+\left|x_s^0-x_s^*\right|<\Bar{\rho},\ V\left(\mathbf{u}^0,x_s^0\right)\leqslant\nu
\end{equation}
for any given $\nu>0$. Take $\nu$ small enough that for any $t\in\left[0,\Bar{T}\right]$, with (\ref{iniT}), (\ref{dvdc}) and by the equivalence of $V$ and the norm
$|\mathbf{u}|_{H^2}+\left|x_s-x_s^*\right|$, we have
\begin{equation}\label{dlt1T-1}
	|\mathbf{u}|_{H^2}+\left|x_s-x_s^*\right|<\Bar{\rho},\ V\left(\mathbf{u}(t),x_s(t)\right)\leqslant\nu.
\end{equation}
It can be observed that \eqref{dlt1T-1} also hold for any $t\in\left[\Bar{T},2\Bar{T}\right]$. Thus we deduce that
\begin{equation}
	|\mathbf{u}|_{H^2}+\left|x_s-x_s^*\right|<\Bar{\rho},\quad t\in\left[(j-1)\Bar{T},j\Bar{T}\right],
	\label{dltjT-1}
\end{equation}
and
\begin{equation}
	V\left(\mathbf{u}(t),x_s(t)\right)\leqslant\nu,\quad t\in\left[(j-1)\Bar{T},j\Bar{T}\right],
	\label{dltjT-2}
\end{equation}
And in the sense of distribution,
\begin{equation}
	\frac{dV}{dt}\leqslant-\frac{\gamma}2V,\quad t\in\left(0,j\Bar{T}\right).
	\label{dltjT-3}
\end{equation}
From (\ref{rco}), there exists $\delta^*$ such that (\ref{dlt0-1}) can be derived if (\ref{wpdcon1}) hold, also, for arbitrary $T>0$ a corresponding $j\in\mathbb{N}$ exists that $\left(0,T\right)\subset\left(0,j\Bar{T}\right)$. Thus, the steady state $\left(\left(\rho^*,z^*\right)^\top,x_s^*\right)$ locally exponentially decay with a rate of $\frac{\gamma}4$ in $H^2$ norm. This completes the proof of Theorem \ref{stbthm}.

\begin{remark}[Existence of the tuning parameters]\label{exK}
	Assume the matrix $K$ is diagonal, i.e. $K=\mbox{diag}(k_1,k_2,k_3)$, then matrix (\ref{+dft}) can be converted into $\mbox{diag}(D_i,\ i=1,2,3)$, where
	$$\begin{aligned}
		D_i&=\frac{-L_i\left(\rho_c^*\! -\! \rho_f^*\right)}{b_i}-\frac{-L_i\left(\rho_c^*\! -\! \rho_f^*\right)}{b_i}\exp\! \left(\! \frac{\gamma x_s^*}{x_i\lambda_i}\right)k_i^2\\
		&+\left[\! \sum_{k=1}^3\! \frac{2b_kL_k\left(\rho_c^*\! -\! \rho_f^*\right)}{\gamma^2}\left(\! \exp\! \left(\! \frac{\gamma x_s^*}{x_k\lambda_k}\right)\! -1\! \right)\! \right]\times\\
       &\qquad\qquad\qquad\qquad  L_i^2\left(\sum_{j=1}^3\exp\left(\frac{\gamma x_s^*}{x_i\lambda_i}-\frac{\gamma x_s^*}{x_j\lambda_j}\right)\right).
	\end{aligned}$$
	Note that we only need to prove
	\begin{equation}
		k_i^2<\exp\left(\frac{-\gamma x_s^*}{x_i\lambda_i}\right)K_i, \ i=1,2,3.
	\end{equation}
	to prove that the matrix (\ref{+dft}) is positive definite, where
	\begin{equation}
    \begin{aligned}
		K_i=&1\! -b_iL_i \Bigg[\! \sum_{i=1}^3\frac{2b_kL_k}{\gamma^2}\left(\! \exp\! \left(\frac{\gamma x_s^*}{x_k\lambda_k}\right)\! -1\! \right)\! \\
        &\quad\qquad\qquad\left(\sum_{j=1}^3\exp\! \left(\frac{\gamma x_s^*}{x_i\lambda_i}-\frac{\gamma x_s^*}{x_j\lambda_j}\! \right)\! \right)\! \Bigg].
		\label{Ki}
        \end{aligned}
	\end{equation}
	Plug the limit case in (\ref{bi+})--(\ref{bi-}), which is $b_i=\frac{-\gamma\exp\left(-\frac{\gamma x_s^*}{x_i\lambda_i}\right)}{3L_i}$, into (\ref{Ki}), we have
	\begin{equation}
		K_i\!=\!1\!-\!\frac29\left[\!3\!\left(\!\sum_{k=1}^3\!\exp\!\left(\!\frac{\gamma x_s^*}{x_k\lambda_k}\!\right)\!\!\right)\!\!-\!\!\left(\!\sum_{k=1}^3\!\exp\!\left(\!\frac{\gamma x_s^*}{x_k\lambda_k}\!\right)\!\!\right)^2\right].
		\label{Kisimp}
	\end{equation}
	Denote by $\displaystyle y=\sum_{k=1}^3\exp\left(\frac{\gamma x_s^*}{x_k\lambda_k}\right),$
	and (\ref{Kisimp}) is equivalent to the quadratic equation
	$\frac29y^2-\frac23y+1,$
	whose discriminant keeps negative and $K_i$ is always greater than $0$. Therefore, the corresponding $k_1,\ k_2,\ k_3$ and $b_1,\ b_2,\ b_3$ defined by (\ref{stbnt-K})--(\ref{stbnt-b}) can be found such that (\ref{bi+})--(\ref{bi-}) holds and the positive definite matrix (\ref{+dft}) exists.
\end{remark}

\section{Simulation}
\label{sec_simu}
We simulate the movement of the shock as well as the distribution of density in a highway segment. Take the length of the highway segment as $L=500$ m, and the road capacity is set as $\rho_m=180$ veh/km. The initial traffic profile $x_s(0)=200$ m, $\rho_f(0,x)=65\ \mbox{veh/km},\ \rho_c(0,x)=130\ \mbox{veh/km}, x\in[0,L]$. The desirable steady states of traffic flow are as follows: $x_s^*=120$ m, $\rho_f^*=60$ veh/km, $\rho_c^*=150$ veh/km, $z_f^*=220$ veh/s, $z_f^*=587.5$ veh/s. Take the traffic pressure as $p(\rho)=24.5(\frac{\rho}{\rho_{m}}-1).$ The control input $\rho(t,0)$ illustrated in Fig.\ref{fig:controlInput} is regulated from the initial data to the desired steady state $\rho^*_f$ under closed-loop operation, while deviates away from $\rho^*_f$ under open-loop operation.
The system can be regulated from the initial profile to the
reference profile using boundary feedback control \eqref{boun}, 
as shown in Fig.\ref{fig:characteristic-line}. The instability of the open-loop system is further illustrated in Fig.\ref{fig:norm}, where the norm of the deviation of the total density from equilibrium increases under open-loop operation, while it converges to zero in closed-loop operation.

One can note that in the absence of control, the open-loop case exhibits upstream shock propagation, leading to the continuous spread of congestion. In contrast, the shock trajectory is regulated and converges towards the prescribed location when the boundary feedback control is applied. After a transient phase, the density approaches the given steady state characterized by two constant states separated by a stationary interface, which is exactly the shock.
\begin{figure}[htbp]
  \centering
  \includegraphics[width=1\linewidth]{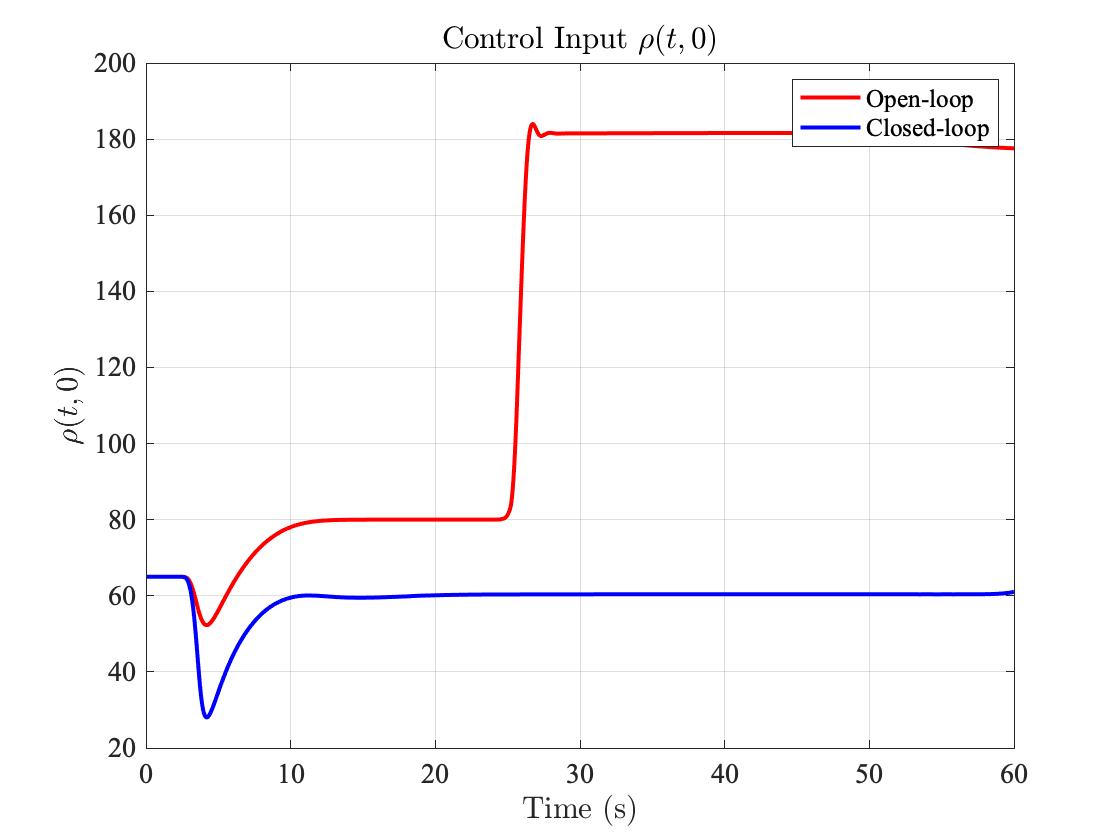}
  \caption{Control input $\rho(t,0)$ for the ARZ model under open-loop and closed-loop cases. } 
  \label{fig:controlInput}
\end{figure}

\begin{figure}[htbp]
  \centering
  \includegraphics[width=1\linewidth]{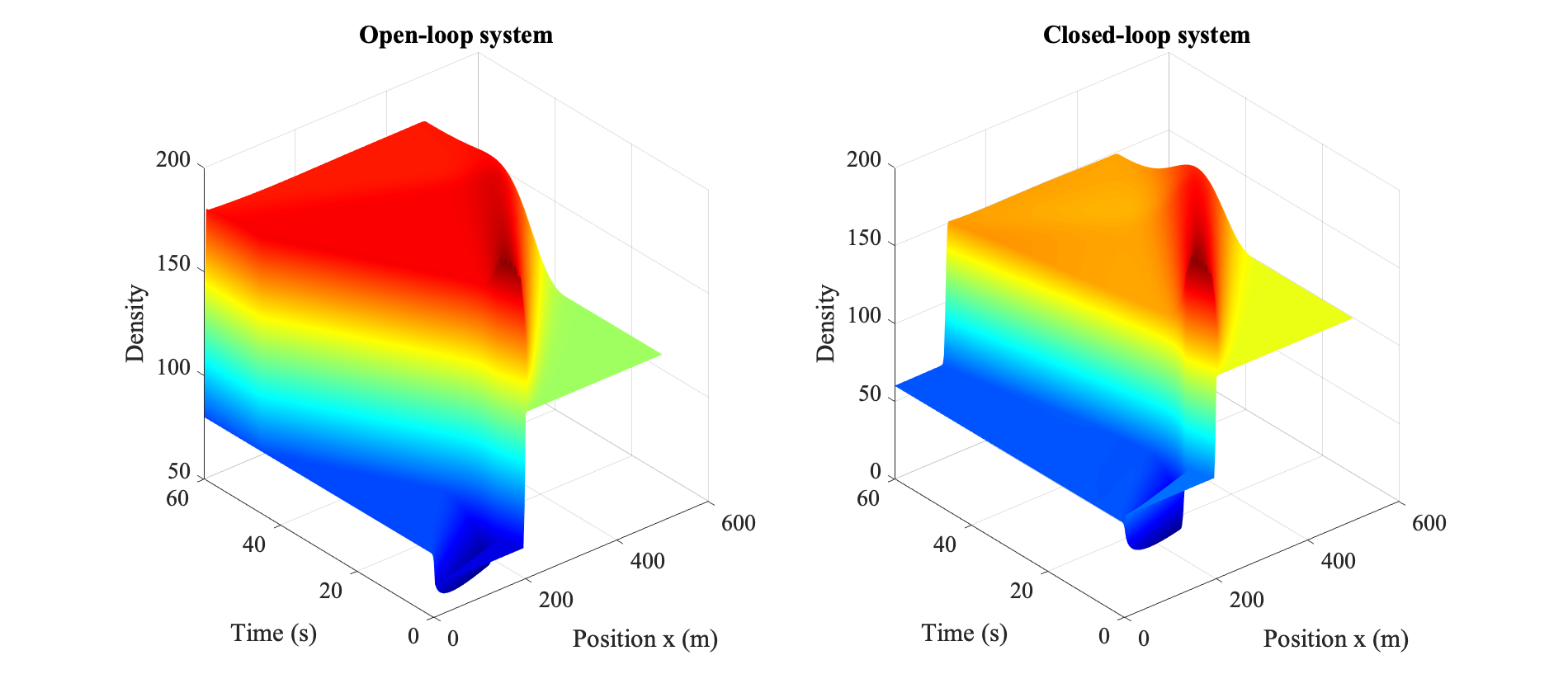}
  \caption{Evolution of traffic density for the ARZ model under both open-loop and closed-loop cases. } 
  \label{fig:characteristic-line}
\end{figure}

\begin{figure}[htbp]
  \centering
  \includegraphics[width=1\linewidth]{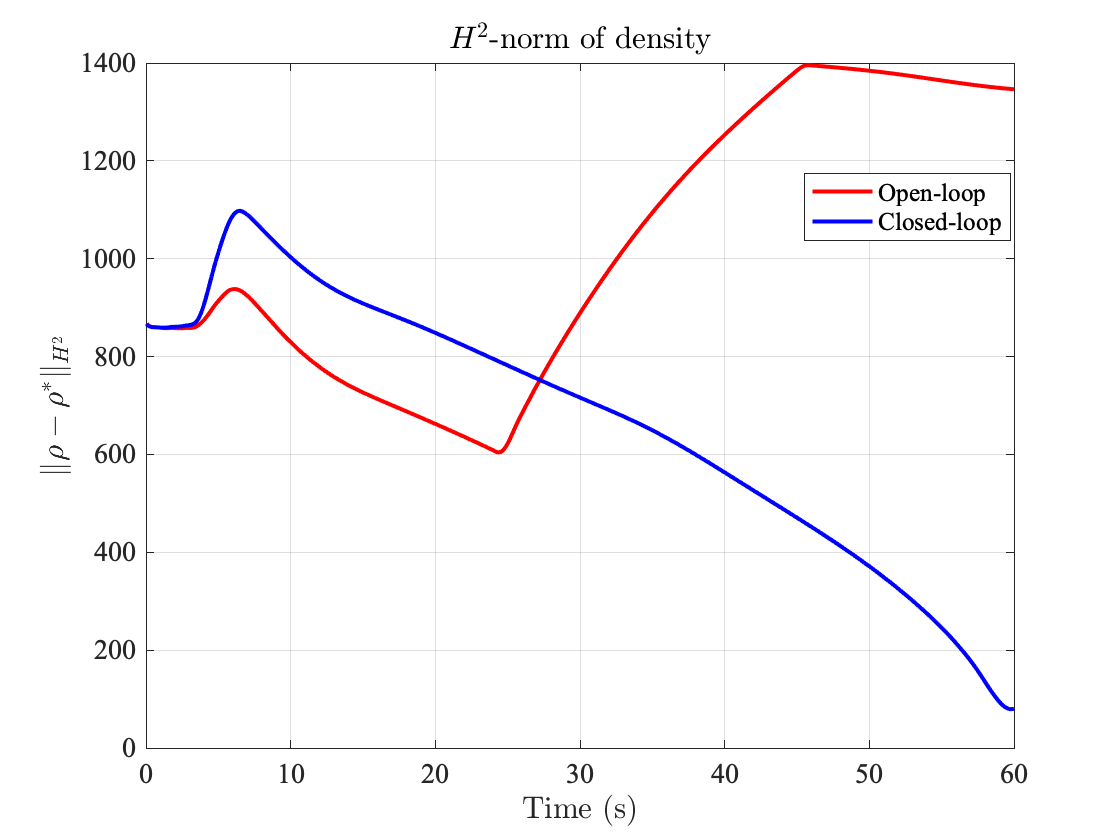}
  \caption{Evolution of $H^2$-norm of the total density } 
  \label{fig:norm}
\end{figure}

\section{Conclusions}
\label{sec_con}
This paper studies boundary feedback stabilization of traffic flow with shocks in the ARZ model. By designing a shock-dependent boundary feedback law, the number of shock waves is prevented from increasing and the system energy  is shown to decay exponentially at an arbitrary rate. To establish stability, a shock-dependent perturbation term is incorporated into the Lyapunov function, modifying its classical structure. Analysis of the Lyapunov evolution proves $H^2$-exponential energy decay. Numerical simulations based on the finite difference method further illustrate the effectiveness of the proposed feedback control.

\bibliographystyle{IEEEtran}
\bibliography{refs_ieee}

@book {BCBook,
    AUTHOR = {Bastin, Georges and Coron, Jean-Michel},
     TITLE = {Stability and Boundary Stabilisation of 1-D Hyperbolic Systems},
    SERIES = {Number 88 in Progress in Nonlinear Differential Equations and Their Applications},
    VOLUME = {},
      NOTE = {},
 PUBLISHER = {Springer International},
      YEAR = {2016},
}

@INPROCEEDINGS{coron1st1,
  author={Coron, J. M. and d'Andréa-Novel, B. and Bastia, G.},
  booktitle={1999 European Control Conference (ECC)}, 
  title={A Lyapunov approach to control irrigation canals modeled by saint-venant equations}, 
  year={1999},
  volume={},
  number={},
  pages={3178-3183}
  }

@ARTICLE{boun1,
  author={Boskovic, D.M. and Krstic, M. and Weijiu Liu},
  journal={IEEE Transactions on Automatic Control}, 
  title={Boundary control of an unstable heat equation via measurement of domain-averaged temperature}, 
  year={2001},
  volume={46},
  number={12},
  pages={2022-2028},
  doi={10.1109/9.975513}}

@book{boun3,
author = {Krstic, Miroslav},
title = {Boundary Control of {PDE}s: A Course on Backstepping Designs},
year = {2008},
isbn = {0898716500},
publisher = {Society for Industrial and Applied Mathematics},
address = {USA}
}

@ARTICLE{lwr,
  author={Lighthill, Michael James and Whitham, Gerald Beresford},
  journal={Proceedings of the Royal Society of London. Series A. Mathematical and Physical Sciences}, 
  title={On kinematic waves II. A theory of traffic flow on long crowded roads}, 
  year={1955},
  volume={229},
  number={1178},
  pages={317-345}
  }

@ARTICLE{richa,
  author={Richards, Paul I},
  journal={Operations Research}, 
  title={Shock Waves on the Highway}, 
  year={1956},
  volume={4},
  number={1},
  pages={42-51}
  }

@article{ar,
  author = {Aw, A. and Rascle, M.},
  title = {Resurrection of ``{S}econd {O}rder" Models of Traffic Flow},
  journal = {SIAM Journal on Applied Mathematics},
  volume = {60},
  number = {3},
  pages = {916-938},
  year = {2000}
}

@article {2017DDTK,
    AUTHOR = {Diagne, Ababacar and Diagne, Mamadou and Tang, Shuxia and
              Krstic, Miroslav},
     TITLE = {Backstepping stabilization of the linearized {\it
              {S}aint-{V}enant-{E}xner} model},
   JOURNAL = {Automatica J. IFAC},
  FJOURNAL = {Automatica. A Journal of IFAC, the International Federation of
              Automatic Control},
    VOLUME = {76},
      YEAR = {2017},
     PAGES = {345--354},
      ISSN = {0005-1098,1873-2836},
   MRCLASS = {93C20 (93D05)},
  MRNUMBER = {3590585},
}

@article{zg,
  title = {A non-equilibrium traffic model devoid of gas-like behavior},
  journal = {Transportation Research Part B: Methodological},
  volume = {36},
  number = {3},
  pages = {275-290},
  year = {2002},
  issn = {0191-2615},
  author = {H.M. Zhang}
}

@article {krstic20082,
    AUTHOR = {Krstic, Miroslav and Smyshlyaev, Andrey},
     TITLE = {Backstepping boundary control for first-order hyperbolic
              {PDE}s and application to systems with actuator and sensor
              delays},
   JOURNAL = {Systems Control Lett.},
  FJOURNAL = {Systems \& Control Letters},
    VOLUME = {57},
      YEAR = {2008},
    NUMBER = {9},
     PAGES = {750--758},
      ISSN = {0167-6911},
   MRCLASS = {93D15 (93C20)},
  MRNUMBER = {2446460},
MRREVIEWER = {Marcelo M. Cavalcanti},
}

@book {KrsticBook,
    AUTHOR = {Krstic, Miroslav and Smyshlyaev, Andrey},
     TITLE = {Boundary {C}ontrol of {PDE}s: A {C}ourse on {B}ackstepping {D}esigns},
    SERIES = {Advances in Design and Control},
    VOLUME = {16},
      NOTE = {},
 PUBLISHER = {Society for Industrial and Applied Mathematics (SIAM),
              Philadelphia, PA},
      YEAR = {2008},
     PAGES = {x+192},
   MRCLASS = {93C20 (35B37 74M05 76D55 93C40)},
  MRNUMBER = {2412038},
}

@inbook{rh1,
author={Rankine, W. J. Macquorn},
title={On The Thermodynamic Theory of Waves of Finite Longitudinal Disturbance},
bookTitle={Classic Papers in Shock Compression Science},
year={1998},
publisher={Springer New York},
address={New York, NY},
pages={133--148},
doi={10.1007/978-1-4612-2218-7_5},
url={https://doi.org/10.1007/978-1-4612-2218-7_5}
}

@article{burger,
author = {Bastin, Georges and Coron, Jean-Michel and Hayat, Amaury and Shang, Peipei},
title = {Exponential boundary feedback stabilization of a shock steady state for the inviscid Burgers equation},
journal = {Mathematical Models and Methods in Applied Sciences},
volume = {29},
number = {02},
pages = {271-316},
year = {2019}
}

@article{sv,
title = {Boundary feedback stabilization of hydraulic jumps},
journal = {IFAC Journal of Systems and Control},
volume = {7},
pages = {100026},
year = {2019},
issn = {2468-6018},
author = {Georges Bastin and Jean-Michel Coron and Amaury Hayat and Peipei Shang}}

@ARTICLE{bstp,
  author={Yu, Huan and Diagne, Mamadou and Zhang, Liguo and Krstic, Miroslav},
  journal={IEEE Transactions on Automatic Control}, 
  title={Bilateral Boundary Control of Moving Shockwave in {L}{W}{R} Model of Congested Traffic}, 
  year={2021},
  volume={66},
  number={3},
  pages={1429-1436}
  }

@ARTICLE{stopandgo,
  author={Zhang, Liguo and Luan, Haoran and Zhan, Jingyuan},
  journal={IEEE Transactions on Automatic Control}, 
  title={Stabilization of Stop-and-Go Waves in Vehicle Traffic Flow}, 
  year={2024},
  volume={69},
  number={7},
  pages={4583-4597}}

@mastersthesis{VARZ,
  author       = {Villa, Stefano},
  title        = {The {Aw-Rascle-Zhang} model with constraints},
  school       = {Universit{\`a} degli Studi di Milano-Bicocca},
  year         = {2016},
  type         = {Master's Thesis},
  eprint       = {1605.00632},
  archivePrefix = {arXiv},
  primaryClass = {math.AP},
  doi          = {10.48550/arXiv.1605.00632},
  url          = {https://arxiv.org/abs/1605.00632}
}

@article{YU2019,
title = {Traffic congestion control for {Aw–Rascle–Zhang }model},
journal = {Automatica},
volume = {100},
pages = {38-51},
year = {2019},
issn = {0005-1098},
author = {Huan Yu and Miroslav Krstic}
}

\end{document}